\newtheorem{theorem}{Theorem}
\newtheorem{proposition}[theorem]{Proposition}
\newtheorem{lemma}[theorem]{Lemma}
\newtheorem{question}[theorem]{Question}
\newtheorem*{example}{Example}
\title{The Alexander polynomial of twisted torus knots}
\author{Kyungbae Park}
\address{Department of Mathematics, Kangwon National University}
\email{kyungbaepark@kangwon.ac.kr}
\author{Adnan}
\address{Department of Mathematics, Kangwon National University}
\email{adnanshahab35@kangwon.ac.kr}
\subjclass[2020]{57K10, 57K14}
\begin{document}
\begin{abstract}
Twisted torus knots are a generalization of torus knots, obtained by introducing additional full twists to adjacent strands of the torus knots. In this article, we present an explicit formula for the Alexander polynomial of twisted torus knots. Our approach utilizes a presentation of the knot group of twisted torus knots combined with Fox’s free differential calculus. As applications, we provide a lower bound for the genus of certain families of twisted torus knots and identify families of twisted torus knots that are not $L$-space knots.
\end{abstract}

\maketitle

\section{Introduction}
Given a quadruple of integers $(p,q;r,s)$ such that $\gcd(p,q)=1$, $1< r\leq |p|+|q|$ and $s\in\mathbb{Z}$, the twisted torus knot $T(p,q;r,s)$ is a generalization of torus knots, achieved by applying $s$ full twists to $r$ adjacent strands in the standard diagram of torus knot $T(p,q)$ on the flat torus. In particular, when $r < p$, which is the case we mainly consider in this paper, this knot can also be described as the closure of the braid corresponding to the braid word
\[
    (\sigma_1\sigma_2\dots\sigma_{p-1})^q(\sigma_1\dots\sigma_{r-1})^{rs}
\] 
with $p$ strands. Figure \ref{fig:TTK} illustrates an example of the twisted torus knot $T(7,4;3,2)$. Note that $T(p,q;r,s)$ with $r\leq q$ is isotopic to $T(q,p;r,s)$, and $T(p,-q;r,s)$ is isotopic to the mirror image of $T(p,q;r,-s)$. Considering our main interests in this paper, such as the Alexander polynomial and the genus of a knot, are invariant under the mirror imaging of a knot, we will assume that $p>q>0$ with $\gcd(p,q)=1$, $1<r<p$, and $s\in\mathbb{Z}$ for twisted torus knot $T(p,q;r,s)$ throughout this paper.

% Review on known results about twisted torus knots
Twisted torus knots were first introduced by Dean in his study of Seifert fibered Dehn surgery \cite{Dean-1996, Dean-2003}. Since then, twisted torus knots have provided interesting examples of small hyperbolic knots \cite{CDW-1999, CKP-2004}, knots that admit unique Heegaard splitting of their complements \cite{Moriah-Sedgwick-2009}, and hyperbolic knots inducing certain types of hyperbolic structures \cite{Himeo-Teragaito-2023}, among other properties. They have also been studied in the context of the $L$-space conjecture as examples of $L$-spaces knots \cite{Vafaee-2015, CGHV-2016, Tran-2020}. 

The geometric types of twisted torus knots have been intensively studied by Lee, and also by Lee and Paiva. In particular, the classification of twisted torus knots that are the unknot was completely achieved in \cite{Lee-2014}. It is also known that some classes of twisted torus knots are, in fact, torus knots \cite{Lee-2019, Lee-2021, Lee-Paiva-2022}, although the complete classification of twisted torus knots that are torus knots has not yet been fully resolved.

% Review on known results about their invariants.
However, not much is known about the polynomial knot invariants for twisted torus knots. Since some twisted torus knots are known to be the unknot, or more generally, torus knots as mentioned above, the Alexander polynomials of twisted torus knots in those families are implicitly known. In \cite{Morton-2006}, Morton provided an explicit formula for the Alexander polynomial of twisted torus knots of the type $T(p,q;2,s)$. His main technique involved the Burau representation alongside an inductive argument based on the skein relation of the Alexander polynomial. If $r=2$, the other skein pairs of a twisted torus knot are also twisted torus knots or links, allowing the use of a recursive formula to obtain the Alexander polynomial of the twisted knot. However, this technique does not work for twisted torus knots with $r>2$, as the other pairs in the skein relation are no longer twisted torus knots or links. The knot Floer homology, which refines Alexander polynomial invariant, for twisted torus knots with $p=qk\pm 1$ is studied in \cite{Vafaee-2015}. In \cite{BD-2023}, the Jones polynomial of twisted torus knots of the type $T(p,q;2,s)$ is computed. 

\begin{figure}[!t]
    \centering
    \includegraphics[angle=90, width=0.7\textwidth]{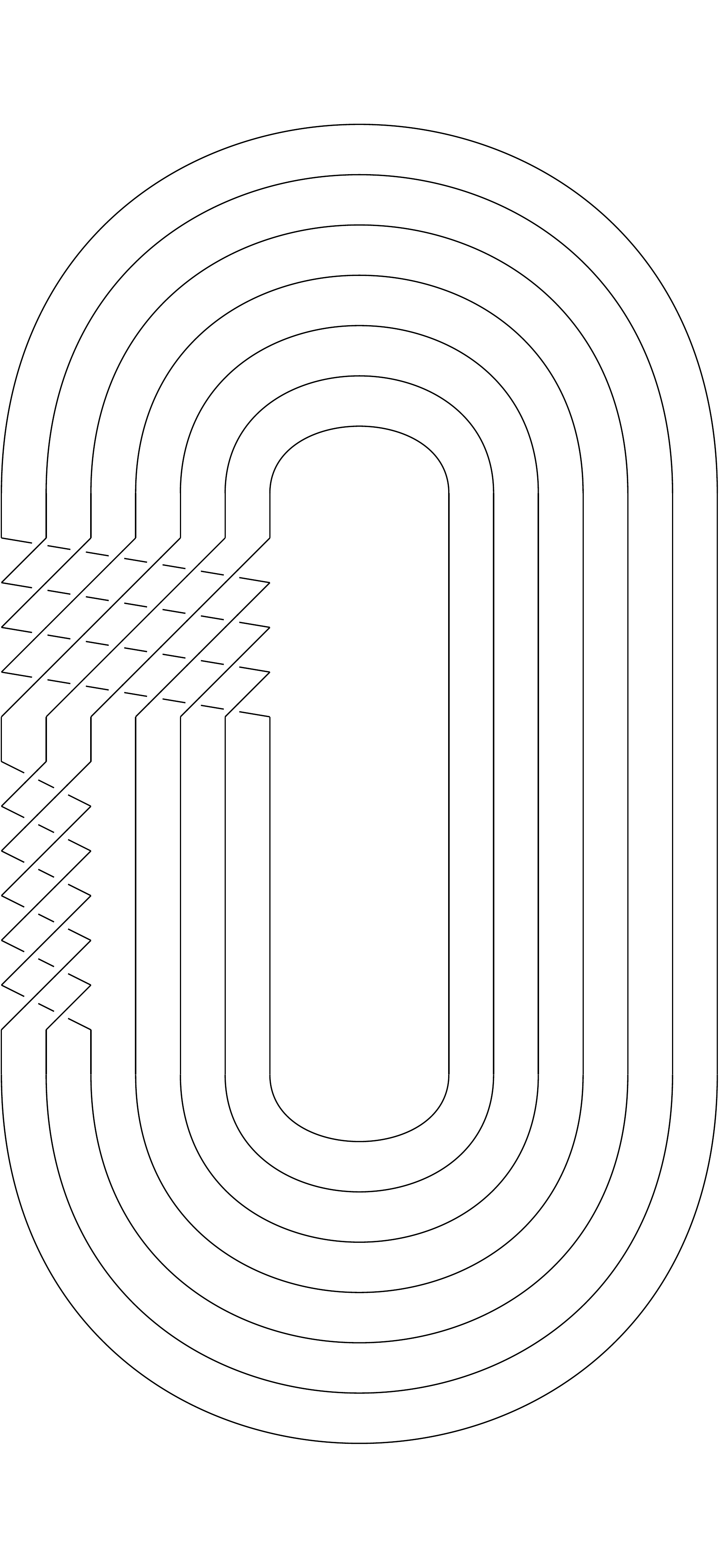}
    \caption{Twisted torus knot $T(7,4;3,2)$}
    \label{fig:TTK}
\end{figure}

% Summarize our results and main theorems
Our main result in this paper is to provide an explicit formula for any twisted torus knot. To achieve this, instead of the skein relation, we employ a classical technique dating back to the 1950s, due to Fox \cite{Fox-1953}, known as Fox's free differential calculus, or Fox calculus for short. We first obtain a presentation of the knot group of twisted torus knots (see Theorem \ref{thm:fund}) by placing the knot on a genus two surface and using Seifert-Van Kampen's theorem. Subsequently, we apply Fox calculus to determine the Alexander polynomial of twisted torus knots.

Given a twisted torus knot $T(p,q;r,s)$, let $m, \overline{k'}, Q',\text{ and }\overline{k_i} \text{ and } Q_i \text{ for }i=1,\dots,r-1$ be nonnegative integers determined by considering modular arithmetic relationships involving $p,q,\text{ and }r$ as defined at the beginning of Section \ref{sec:fund}. Then:
\begin{theorem}\label{thm:Alex}
    Let $p$ and $q$ be relatively prime integers with $p>q>0$, and let $1< r<p$ and $s\in\mathbb{Z}$. The Alexander polynomial of the twisted torus knot $T(p,q;r,s)$ is given, up to multiplication by $\pm t^{\pm n}$, as follows:
    \[ \Delta_{T(p,q;r,s)}(t)=\frac{1-t}{(1-t^p)(1-t^q)(1-t^{r})}\left(\tilde{X}(t)Y(t)-X(t)\tilde{Y}(t)\right),\]
    where
    \begin{align*}
       X(t)&=1-(1-t^{rs})\sum_{i=1}^{r-1}t^{\overline{k_i}p+(i-1)rs}-t^{pq+(r-1)rs},\\
       \tilde{X}(t)&=\begin{cases}
            1-t^{\overline{k'}p},&\text{if } m=0,\\
            1-(1-t^{rs})\displaystyle\sum_{i=1}^{m}t^{\overline{k_i}p+(i-1)rs}-t^{\overline{k'}p+mrs},&\text{otherwise,}
       \end{cases}\\
       Y(t)&=1-(1-t^{rs})\sum_{i=1}^{r-1}t^{Q_iq+(i-1)rs}-t^{pq+(r-1)rs},\\
       \tilde{Y}(t)&=\begin{cases}
           1-t^{Q'q},&\text{if } m = 0,\\
            1-(1-t^{rs})\displaystyle\sum_{i=1}^{m}t^{Q_iq+(i-1)rs}-t^{Q'q+mrs},&\text{otherwise.}       \end{cases}
   \end{align*}
%   For cases where the upper bound of the summation $m$ equals zero, we assume that the summation term is zero.
\end{theorem}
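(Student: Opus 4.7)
The plan is to apply Fox's free differential calculus to the knot group presentation furnished by Theorem~\ref{thm:fund}. The form of the claimed formula---a quotient whose denominator factors as $(1-t^p)(1-t^q)(1-t^r)$, with numerator a $2\times 2$ determinant-shaped expression $\tilde{X}Y-X\tilde{Y}$---strongly suggests that the presentation has three generators $a,b,c$ whose abelianized images are powers of $t$ with exponents corresponding to $p$, $q$, and $r$, together with two relators $R_1, R_2$ arising from the two handlebodies of the genus two Heegaard decomposition. Under this setup, $\Delta(t)$ is computed as an appropriate $2\times 2$ minor of the $2\times 3$ Fox Jacobian, divided by the factor $1-\phi(x)$ corresponding to the deleted column.

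First I would write down the two relators $R_1, R_2$ explicitly in terms of the generators, using the braid word description $(\sigma_1\cdots\sigma_{p-1})^q(\sigma_1\cdots\sigma_{r-1})^{rs}$. The torus braiding part produces a relator whose structure is governed by which strand lands in which position after one full cycle, encoded by the residues $\overline{k_i}$ and $Q_i$ (together with the ``partial'' indices $\overline{k'}$, $Q'$ and the counter $m$ tracking how many strands fit before the wrap-around); the inner twist block contributes the powers of $t^{rs}$ that appear in the sums.

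Next I would compute the Fox derivatives $\partial R_i/\partial x_j$ using the identities
\[
\partial_x(uv)=\partial_x u+u\,\partial_x v,\qquad \partial_x(x^n)=1+x+\cdots+x^{n-1},
\]
and then apply the abelianization $\phi$. Each long power $x^n$ produces a geometric series that collapses under $\phi$ to $(1-t^{n\alpha})/(1-t^{\alpha})$ where $\phi(x)=t^\alpha$, supplying the denominators $(1-t^p)$, $(1-t^q)$, $(1-t^r)$. The remaining factors, read off term by term, should assemble into the four polynomials $X,\tilde{X},Y,\tilde{Y}$, with the sums $\sum t^{\overline{k_i}p+(i-1)rs}$ and $\sum t^{Q_iq+(i-1)rs}$ recording how each twisted strand contributes after being transported through the $q$-fold torus cycle. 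Expanding the chosen $2\times 2$ minor then yields $\tilde{X}(t)Y(t)-X(t)\tilde{Y}(t)$ with the claimed prefactor, the extra $(1-t)$ in the numerator coming from the standard normalization in passing from the first elementary ideal to the Alexander polynomial for a deficiency one presentation.

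The main obstacle, I expect, is not the Fox calculus itself but the combinatorial bookkeeping: one must verify that the exponents appearing in the derivatives really do match $\overline{k_i}p+(i-1)rs$ and $Q_iq+(i-1)rs$, and in particular handle the split between $m=0$ and $m\neq 0$ in the formulas for $\tilde{X}$ and $\tilde{Y}$, which reflects whether the final partial cycle of the torus braiding synchronises cleanly with the $r$-strand twist block. Tracking residues modulo $r$ carefully, together with the defining relations of $\overline{k_i}, Q_i, \overline{k'}, Q'$, should resolve this in parallel arguments for the two cases, and the passage between the $X,\tilde{X}$ side (using generator of $\phi$-weight $p$) and the $Y,\tilde{Y}$ side (using generator of $\phi$-weight $q$) should be entirely symmetric once the relators are written down.
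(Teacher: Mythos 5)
Your overall strategy --- apply Fox calculus to the presentation of Theorem~\ref{thm:fund}, abelianize, and read off the Alexander polynomial from a suitably normalized minor --- is indeed the paper's strategy. But as written the proposal has genuine gaps rather than just deferred bookkeeping. First, you guess the shape of the presentation instead of using it: the actual presentation has \emph{four} generators $x,y,z,w$ (abelianizing to $t^p$, $t^{r}$, $t^{-q}$, $t^{-rs}$) and \emph{three} relators, one from each handle curve plus the short relator $y^sw$. The fourth generator, with weight $-rs$, is precisely what produces the $t^{(i-1)rs}$ shifts inside the sums, and the relator $y^sw$ is what makes the $3\times 4$ Jacobian tractable. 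Your proposed $2\times 3$ Jacobian with generator weights $p,q,r$ does not match this, and your fallback plan of extracting relators directly from the braid word would land you in a Wirtinger-type presentation with far too many generators; compressing that into a usable form is essentially the content of Theorem~\ref{thm:fund}, and it is not governed by ``residues modulo $r$'' --- the quantities $Q_i$, $k_i$, $d_i$ record how a curve on the Heegaard surface meets sectors of an attaching disk labelled by residues modulo $p$, compared against the threshold $r$.

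Second, the step where ``the remaining factors \dots should assemble into the four polynomials'' conceals the one genuinely non-formal ingredient: the arithmetic identities $Q_iq=\overline{k_i}p+n$ and $Q'q=\overline{k'}p+r$ (Lemma~\ref{lem:identity} in the paper). Without them the abelianized derivatives of the two long relators do not combine into $\tilde X Y - X\tilde Y$ over the common denominator, and the several minors cannot be shown proportional. Finally, your normalization (divide the chosen minor by the $1-\phi(x_j)$ of the deleted column and restore a factor of $1-t$) gives the right answer but is asserted rather than justified: the paper proves it by computing all four $3\times3$ minors, establishing $(1-t^r)m_1=-(1-t^p)m_2$, $t^q(1-t^r)m_3=-(1-t^q)m_2$, and $t^{rs}(1-t^r)m_4=(1-t^{rs})m_2$, and then using $\gcd(p,q)=1$ to show the normalized minors have greatest common divisor $1$, so that $D(t)=\frac{1-t}{1-t^r}m_2$ generates the first elementary ideal. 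If you intend instead to invoke the general theorem on deficiency-one presentations of knot groups, you need to state and cite it; as written, the proposal assumes its conclusion.
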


Utilizing the fact that half of the \emph{degree} of the Alexander polynomial of a knot (defined as the difference between the highest power and the lowest power of the polynomial) provides a lower bound for the genus of the knot (defined as the minimal genus of a Seifert surface for the knot), we can deduce information about the genus of $T(p,q;r,s)$ in terms of $p,q,r, \text{ and } s$ for certain families.
\begin{theorem}\label{thm:genus}
    Let $g(T(p,q;r,s))$ denote the genus of the twisted torus knot $T(p,q;r,s)$, where $\gcd(p,q) = 1$, $p > q > 0$, $1 < r < p$, and $s \in \mathbb{Z}$.
    
    If $s>0$ or $r|s| < q$, then
    \[\frac{(p-1)(q-1) + (r-1)rs}{2}\leq g(T(p,q;r,s)) \leq \frac{(p-1)(q-1) + (r-1)r|s|}{2}.\]
    
    In particular, if $s > 0$, then
    \[g(T(p,q;r,s)) = \frac{(p-1)(q-1) + (r-1)rs}{2}.\]
\end{theorem}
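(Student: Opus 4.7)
The plan is to prove the upper and lower bounds separately; the ``in particular'' statement is then immediate once both are in place, since $|s|=s$ when $s>0$ and the two bounds coincide.

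For the upper bound, I would apply Seifert's algorithm to the standard closed-braid diagram of $T(p,q;r,s)$. A braid on $p$ strands contributes $p$ coherently oriented Seifert circles, and the braid word $(\sigma_1\cdots\sigma_{p-1})^q(\sigma_1\cdots\sigma_{r-1})^{rs}$ has $(p-1)q + (r-1)r|s|$ crossings, regardless of the sign of $s$. The resulting Seifert surface is connected because its boundary is a knot, and its Euler characteristic is $p - (p-1)q - (r-1)r|s|$, so
\[
g(T(p,q;r,s)) \;\le\; \frac{(p-1)(q-1) + (r-1)r|s|}{2}
\]
for every $s \in \mathbb{Z}$; no hypothesis on $s$ is needed here.

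For the lower bound, I would apply the classical inequality $2g(K) \ge \deg \Delta_K$ to the formula of Theorem \ref{thm:Alex}. Clearing the denominator gives
\[
\Delta_{T(p,q;r,s)}(t)(1-t^p)(1-t^q)(1-t^r) = (1-t)\bigl(\tilde X(t)Y(t) - X(t)\tilde Y(t)\bigr),
\]
so by additivity of span under multiplication one has $\deg \Delta_{T(p,q;r,s)} = \operatorname{span}\bigl(\tilde X Y - X \tilde Y\bigr) + 1 - (p+q+r)$. I would then read off the extremal monomials of $\tilde X Y - X \tilde Y$ directly from the statement of Theorem \ref{thm:Alex}: the top monomials of each of $X,\tilde X,Y,\tilde Y$ are single explicit powers of $t$ (namely $-t^{pq+(r-1)rs}$ in $X$ and $Y$, and $-t^{\bar k' p + mrs}$, $-t^{Q'q + mrs}$ in $\tilde X, \tilde Y$, with the shortened $m=0$ forms), hence the leading monomials of $\tilde X Y$ and $X\tilde Y$ are explicit as well, and symmetrically at the bottom. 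The hypothesis ``$s>0$ or $r|s|<q$'' is used precisely to ensure (i) that the $t^{pq+(r-1)rs}$ terms in $X$ and $Y$ are genuinely dominant within their respective polynomials, and (ii) that no accidental cancellation occurs between the corresponding extremal monomials of $\tilde X Y$ and $X\tilde Y$. Using the arithmetic relations between $m,\bar k',Q',\bar k_i,Q_i$ introduced in Section \ref{sec:fund}, one computes $\operatorname{span}(\tilde X Y - X \tilde Y) = pq + (r-1)rs + r$, yielding $\deg \Delta_{T(p,q;r,s)} = (p-1)(q-1) + (r-1)rs$ and therefore the desired lower bound.

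The main technical obstacle is this monomial bookkeeping: one must verify, under the hypothesis, that the expected leading and trailing terms of $\tilde X Y - X\tilde Y$ survive the subtraction, which requires combining the congruences defining $m,\bar k',Q',\bar k_i,Q_i$ with a separate treatment of the degenerate case $m=0$ where $\tilde X$ and $\tilde Y$ simplify. The symmetry $\Delta(t^{-1}) = \pm t^{\pm n}\Delta(t)$ of the Alexander polynomial halves the work, as it suffices to analyze either the leading or trailing monomial. Once both bounds are established, the ``in particular'' statement is automatic for $s>0$; alternatively, one can observe that the braid word is positive when $s>0$, so the closure is fibered by Stallings' theorem and the Seifert surface constructed above is a fiber surface of minimal genus.
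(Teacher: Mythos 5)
Your proposal is correct and follows essentially the same route as the paper: the upper bound via Seifert's algorithm on the closed-braid diagram ($p$ circles, $(p-1)q+(r-1)r|s|$ crossings), and the lower bound via $2g\ge\deg\Delta$ together with the degree computation $\deg\Delta_{T(p,q;r,s)}=(p-1)(q-1)+(r-1)rs$ under the stated hypothesis, which the paper carries out in Propositions \ref{prop:degree1} and \ref{prop:degree2} using exactly the extremal-monomial analysis and the identities $Q_iq=\overline{k_i}p+n$, $Q'q=\overline{k'}p+r$ that you describe. Your observation that the upper bound needs no hypothesis on $s$, and the alternative fibredness argument via Stallings for $s>0$, also match remarks in the paper.
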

\begin{proof}
    If $s > 0$ or $r|s| < q$, then the degree of the Alexander polynomial of $T(p,q;r,s)$ is given by $(p-1)(q-1) + (r-1)rs$ by Proposition \ref{prop:degree1} and Proposition \ref{prop:degree2}. Therefore, 
    \[\frac{(p-1)(q-1) + (r-1)rs}{2}\leq g(T(p,q;r,s)).\]
    Let $\Sigma$ be the Seifert surface obtained by applying the Seifert algorithm to the knot diagram depicted by the closure of the braid, as shown in Figure \ref{fig:TTK} for $T(p,q;r,s)$. Considering that there are $p$ Seifert circuits and $(p-1)q+(r-1)r|s|$ crossings in the Seifert algorithm, we have 
    \[
        g(\Sigma)=\frac{(p-1)(q-1)-(r-1)r|s|}{2}.
    \] 
    Since the genus of $T(p,q;r,s)$ is bounded above by the genus of $\Sigma$, the theorem follows.
\end{proof}
Note that the genus of $T(p,q;r,s)$ for $s>0$ can also be easily determined using a result by Stalling \cite{Stallings-1978}, since this knot is the closure of a positive braid. If $s < 0$, however, it presents a more interesting case in the study of twisted torus knots, making the determination of the genus for general twisted torus knots more challenging. The assumption $r|s|<q$ can be interpreted as a scenario in which the additional negative twists added to the torus knot are relatively fewer than the original twists of the torus knot. Conversely, we can also obtain a lower bound for the genus using the degree in the opposite case when $r|s|$ is relatively greater (see Proposition \ref{prop:degree3}).

In particular, we note that the lower bound for the genus given in the theorem above is sharp for certain families of twisted torus knots with $s<0$. For example, the genus of $T(p, p-1; 3, -1)$ is given by \[g(T(p, p-1; 3, -1))=\frac{(p-1)(p-2)}{2}-3,\]
which coincides with half of the degree of its Alexander polynomial. See the last part of Section \ref{sec:Degree}.

One interesting topic in low-dimensional topology is the classification of all knots in $S^3$ that produce lens spaces through integral surgery along the knots, known as the \emph{Berge conjecture}. In \cite{Morton-2006}, Morton showed the existence of an infinite family of twisted torus knots of the type $T(p,q;2,s)$ whose Alexander polynomials have coefficients that are neither $\pm1$ nor $0$. Consequently, by the result of Ozsv\'ath and Szab\'o's \cite[Corollary 1.3]{Ozsvath-Szabo-2005}, these knots do not admit lens space surgeries; in other words, any integral surgery on these knots does not produce lens spaces. In \cite{Vafaee-2015}, Vafaee classified twisted torus knots of the type $T(qk\pm1,q;r,s)$ that admit $L$-space surgery, a generalization of lens space surgery in the context of Heegaard Floer theory. Using our formula, one can identify infinite families of twisted torus knots that do not admit lens space surgery through a simple computation. The following is one such new example.
\begin{theorem}\label{thm:lens_space_surgery}
    For each $n\geq 0$ and $s\geq 2$, the Alexander polynomial of the twisted torus knot $T(9+2n, 7+2n; 3, s)$ has a coefficient with an absolute value of at least $2$. Consequently, this knot does not admit lens space surgery (and, more generally, is not an $L$-space knot).
\end{theorem}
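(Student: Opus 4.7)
The plan is to apply Theorem \ref{thm:Alex} directly to the family $(p,q,r,s)=(9+2n,7+2n,3,s)$ and to exhibit a single coefficient of the resulting Alexander polynomial whose absolute value is at least $2$. The conclusion then follows from Ozsv\'ath--Szab\'o's result \cite[Corollary 1.3]{Ozsvath-Szabo-2005}, which states that the Alexander polynomial of an $L$-space knot has coefficients only in $\{0,\pm 1\}$; in particular an $L$-space knot admits no lens space surgery.

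First I would compute the modular data $\overline{k_1},\overline{k_2},Q_1,Q_2,\overline{k'},Q',m$ associated with $p=9+2n$, $q=7+2n$, $r=3$. Since $p-q=2$ and $r=3$, the relevant residues $\pm q \pmod p$, $\pm p \pmod q$, etc., admit uniform closed-form expressions that are linear in $n$ (with possibly one case distinction depending on the parity of $n$, which I would handle separately). Once this data is in hand, $X(t)$, $\tilde X(t)$, $Y(t)$, $\tilde Y(t)$ become concrete polynomials with at most four monomials each and with exponents that are explicit linear functions of $n$ and $s$.

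Next I would expand the numerator $\tilde{X}(t)Y(t)-X(t)\tilde{Y}(t)$ and perform the division by $(1-t^p)(1-t^q)(1-t^r)/(1-t)$. Rather than normalizing the whole polynomial, I would isolate a single test coefficient that can be computed exactly; a natural choice is a low-order coefficient near the minimum exponent of $\Delta_{T(p,q;r,s)}$, where the exponents $\overline{k_i}p$, $Q_i q$, and $irs$ are well separated for all $n\geq 0$ and $s\geq 2$, so the contributions from the different monomials in $\tilde X Y-X\tilde Y$ do not overlap and can be read off after dividing by the cyclotomic factor $(1-t^p)(1-t^q)(1-t^3)/(1-t)$. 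I expect the interference pattern to produce a clean coefficient equal to $\pm 2$ (or larger), as in Morton's $r=2$ analysis.

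The main obstacle will be choosing this test coefficient so that its magnitude is at least $2$ uniformly across the two-parameter family $n\geq 0$, $s\geq 2$. My fallback strategy is to first verify the statement by direct computation for a few base cases such as $(n,s)\in\{(0,2),(0,3),(1,2)\}$ using the formula in Theorem \ref{thm:Alex}, then identify the stable coefficient pattern that persists as $n$ and $s$ grow, and finally prove that this pattern is preserved by using the fact that increasing $n$ or $s$ merely shifts exponents in $X,\tilde X,Y,\tilde Y$ while preserving the local structure near the target coefficient. Once such a coefficient is exhibited, the cited corollary of Ozsv\'ath--Szab\'o closes the argument.
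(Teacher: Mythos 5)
Your overall strategy is the same as the paper's: compute the modular data for $(p,q,r)=(9+2n,7+2n,3)$, substitute into Theorem \ref{thm:Alex}, exhibit one coefficient of absolute value at least $2$, and invoke Ozsv\'ath--Szab\'o. (For the record, the modular data works out uniformly with no parity split: $[q^{-1}]=4+n$, giving $Q_1=4+n$, $Q_2=2(4+n)$, $\overline{k_1}=3+n$, $\overline{k_2}=2(3+n)$, $Q'=3+n$, $m=0$, $\overline{k'}=2+n$.) However, what you have written is a plan rather than a proof: the entire mathematical content of this theorem is the identification of a specific exponent and the verification that its coefficient has modulus at least $2$ uniformly in $n\geq 0$ and $s\geq 2$, and that step is deferred (``I expect the interference pattern to produce a clean coefficient equal to $\pm 2$''). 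The paper does this concretely: it shows the coefficient of $t^{31+15n+2n^2}$ equals $-2$, by noting that for $s\geq 2$ the only monomials of $\tilde{X}Y-X\tilde{Y}$ with exponent at most $31+15n+2n^2$ are $-t^{18+13n+2n^2}+t^{21+13n+2n^2}+t^{(3+n)(9+2n)}-t^{(4+n)(7+2n)}$, and then collecting the six contributions of that degree from the product with $(1-t)(1-t^{9+2n})^{-1}(1-t^{7+2n})^{-1}(1-t^3)^{-1}$, four negative and two positive.

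Two specific points where your sketch would need repair. First, your heuristic that the contributions ``do not overlap and can be read off'' is at odds with what actually happens: a coefficient of modulus $2$ can only arise from an overlap, and the proof consists precisely in controlling which terms collide at the chosen exponent. Second, the hypothesis $s\geq 2$ plays an essential role that your argument never engages: the monomials $t^{(3+n)(9+2n)+3s}$ and $t^{(4+n)(7+2n)+3s}$ in $\tilde{X}Y-X\tilde{Y}$ clear the threshold $31+15n+2n^2$ exactly when $3s>4$, and for $s=1$ they would land at or below it and change the count. Your fallback of checking base cases and asserting that the ``pattern persists'' as $n$ and $s$ grow is plausible but is itself the statement to be proved; without the explicit bookkeeping above it does not constitute a proof.
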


\subsection*{Further Discussion}
\subsubsection*{Classification of twisted torus knot} It is interesting to explore how the Alexander polynomial serves as a powerful invariant for twisted torus knots. It is not difficult to observe that there are many pairs of twisted torus knots with different parameters that are isotopic, even in the case when $s>0$. For example, $T(1+3n, 3;2, 1+m)$ and $T(3+2m, 2;3, n)$ are isotopic for each $n>0$ and $m>0$. We implemented Python code to compute the Alexander polynomial of twisted torus knots using our formula (see the second author's homepage). Specifically, let $\mathcal{T}$ denote the set of $7,450$ twisted torus knots with $3\leq p\leq 20$, $0<q<p$ with $\gcd(p,q)=1$, $1< r<p$ and $1\leq s\leq 5$. We found that each pair in $\mathcal{T}$ with the same Alexander polynomial also has the same volume (up to an error $<10^{-8}$ with SnapPy's results) if they are hyperbolic knots (determined by SnapPy \cite{SnapPy}). Conversely, any pair of two hyperbolic twisted torus knots in $\mathcal{T}$ with the identical volume (determined by SnapPy) also has the same Alexander polynomial. These observations lead us to pose the following question.
\begin{question}
    Is Alexander polynomial a complete invariant for positively twisted torus knots?
\end{question}

Here ``positively twisted" refers to the case when $s>0$. The question above is not expected to hold for negatively twisted torus knots, as we identified pairs of negatively twisted torus knots that share the same Alexander polynomial but have different volumes according to SnapPy. For example, $T(4, 3; 2, -5)$ and $T(6, 5; 4, -2)$ form such a pair.

\subsubsection*{Geometric types of twisted torus knots} Since the Alexander polynomial of torus knots is well-known, and the Alexander polynomial of satellite knots can be expressed in terms of those of their companion and pattern, the Alexander polynomial of knots can serve as an obstruction for a knot to be classified as a specific torus knot or a particular satellite knot. By combining topological arguments, the Alexander polynomial can demonstrate that a given twisted torus knot is not of the presumed type. We plan to explore this topic further in future work.

\subsubsection*{Knot Floer homology} Knot Floer homology \cite{Rasmussen-2003, Ozavath-Szabo-2004}, a categorification of the Alexander polynomial, is a powerful knot invariant that detects the exact genus of knots, their fiberedness, and other properties. The Knot Floer homology of twisted torus knots has only been computed for certain families in \cite{Vafaee-2015}. Given that our approach utilizes diagrams of twisted torus knots on a Heegaard surface, and that the Alexander polynomial provides the Euler characteristic of Knot Floer homology, this raises intriguing questions.
\begin{question}
    What is the knot Floer homology of twisted torus knots?
\end{question}

\subsubsection*{Generalization to other families of knots} Twisted torus knots are originally defined for the parameter $2\leq r \leq |p|+|q|$ (see \cite[Chapter 3.1]{Dean-2003}). Although we restrict our attention to the case where $2\leq r<\max\{|p|,|q|\}$, similar arguments can be applied to the general case, though they would yield a different type of formula. 

Twisted torus knots can also be generalized or may overlap with larger families of knots and links, such as \emph{twisted torus links}, links obtained by concatenating two braid words for torus links, and \emph{$T$-links} (equivalently \emph{Lorenz links}), which are links formed by concatenating multiple braid words for torus links $T(p_i, q_i)$ with $q_i>0$ \cite{Birman-Kofman-2009}. We anticipate that the techniques used in this work can also be extended to these families to compute their Alexander polynomials.
\begin{question}
    What is the Alexander polynomial of twisted torus links or $T$-links?
\end{question}

\subsection*{Acknowledgements}
The authors thank the referee for valuable suggestions that improved the manuscript. They also thank Thiago de Paiva for his interest and helpful discussions. Kyungbae Park was supported by the National Research Foundation of Korea (NRF) grant funded by the Korea government (No. RS-2022-NR073368).

\section{Fundamental Group of Twisted Torus Knots}\label{sec:fund}
In this section, we utilize the standard genus two Heegaard decomposition of $S^3$ along with Seifert-Van Kampen's theorem to provide a presentation of the knot group of the twisted torus knot $T(p,q;r,s)$. To accurately describe this presentation, it is necessary to define some notations relating to the residue arithmetic of $p$, $q$ and $r$. For the remainder of our discussion, we will fix the tuple $(p,q;r,s)$ under the conditions that $p>q>0$ with $\gcd(p,q)=1$, $1< r<p$, and $s\in\mathbb{Z}$.

Let $[x]$ denote the residue of $x$ modulo $p$, i.e. the unique integer $[x]$ such that $0 \leq [x] < p$ and $[x]\equiv x$ modulo $p$. Define $Q$ as the set of residues of the multiples $nq^{-1}$, $n=0,\dots,r-1$ of 
$q^{-1}$ modulo $p$, where $q^{-1}$ denotes the multiplicative inverse of $q$ modulo $p$. The elements of $Q$ are ordered in ascending sequences as $Q_0,\dots,Q_{r-1}$. This set can be expressed as:
\begin{equation*}
    Q:=\{[nq^{-1}]|n=0,\dots,r-1\}=\{Q_0<Q_1<\dots<Q_{r-1}\}.
\end{equation*}
Define 
    \[d_i:=Q_i-Q_{i-1} \text{ for } i=1,\dots,r-1,\]
and
    \[d_r:=p-Q_{r-1}.\] 
Let $R$ be the set of residues of multiples $-nq^{-1}$ for $n=1,\dots,q$ of $-q^{-1}$ modulo $p$. This can be expressed as:
    \[R:=\{[-nq^{-1}]|n=1,\dots,q\}\]
Define $k_i$ as the cardinality of the set of elements $l$ in $R$ such that $Q_{i-1}\leq l<Q_i$ for $i=1,\dots,r-1$, and $k_r$ as the cardinality of the set of elements $l$ in $R$ satisfying $Q_{r-1}\leq l$.
    \[k_i:=\left|\{l\in R|Q_{i-1}\leq l<Q_i\}\right|\text{ for } i=1,\dots,r-1,\]
and
    \[k_r:= \left|\{l\in R|Q_{r-1}\leq l\}\right|.\]
Define $Q':=[rq^{-1}]$. Note that $Q'$ is not contained in $Q$ since we assume $p$ and $q$ are relatively prime. Consequently, we have the ordering: 
    \[Q_0<\dots<Q_m<Q'<Q_{m+1}<\dots<Q_{r-1}\] 
for some $0\leq m\leq r-1$. Denote $d':=Q'-Q_m$ and $k':=|\{l\in R|Q_{m}\leq l< Q'\}|$.
We further define \[\overline{k_i}:=k_1+\dots+k_i\text{ for }i=1\dots r-1,\]
and
\[\overline{k'}:=k_1+\dots+k_m+k'.\]

\begin{figure}[!t]
    \centering   
    \includegraphics[width=0.8\textwidth]{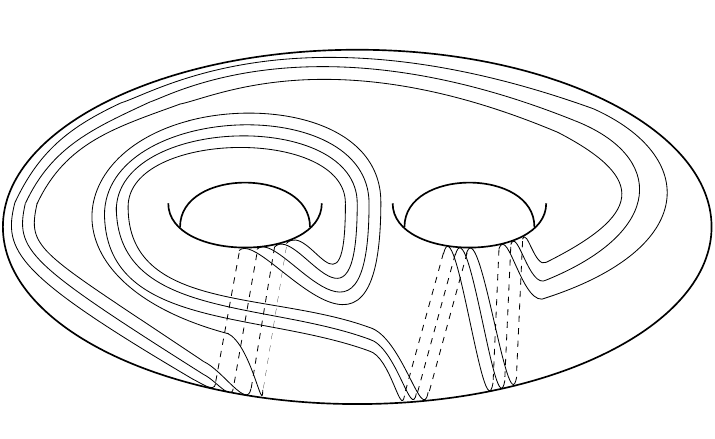}
    \caption{Twisted torus knot $T(7,4;3,2)$ on a genus two surface}
    \label{fig:TTK_on_surface}
\end{figure}

\begin{example}\rm We give an example illustrating the computation of the above numbers for an infinite family of triples $(p,q,r)$, which will be used in Section~\ref{sec:L-space}. Let $(p,q,r)=(9+2\ell,7+2\ell,3)$ where $\ell$ is a nonnegative integer. Note that $[q^{-1}]=[(7+2\ell)^{-1}]=4+\ell$. Therefore \[
    Q=\left\{\left[n(4+\ell)\right]|n=0,1,2\right\}=\{0, 4+\ell, 8+2\ell \},\]
so we have
    \[Q_0=0, \quad Q_1 = 4+\ell,\quad \text{and}\quad Q_2=2(4+\ell),\]
and 
    \[d_1=d_2= 4+\ell\quad\text{and}\quad d_3=p-Q_2= 1.\]
We also have 
    \[R=\left\{[-n(4+\ell)]| n = 1,2, \dots,(7+2\ell)\right\}=\{1,2,\dots,3+\ell, 5+\ell,6+\ell\dots,8+2\ell\},\]
and hence 
    \[k_1 = k_2 = 3+\ell, \quad\text{and}\quad k_3=1.\] 
Thus 
    \[\overline{k_1} =3+\ell \quad\text{and}\quad \overline{k_2}=2(3+\ell).\]
Since $Q'=[3(4+\ell)]=3+\ell$ and so $Q_0<Q'<Q_1$, we obtain \[
    m=0\quad\text{and}\quad\overline{k'}=2+\ell.\]

\end{example}

\begin{theorem}\label{thm:fund}
         Let $p$ and $q$ be relatively prime with $0<q<p$, and $2\leq r<p$ and $s\in\mathbb{Z}$. Then the knot group of the twisted torus knot $T(p,q;r,s)$ has the following presentation.
	   \[\pi_1(S^3- T(p,q;r,s))\cong\langle x,y,z,w|r_1,       r_2, r_3\rangle,\]
         where	
        \[	\begin{cases}
		r_1=x^{k_1}y^sx^{k_2}y^s\cdots x^{k_m}y^sx^{k'}y\cdot z^{d'}wz^{d_m}wz^{d_{m-1}}\cdots wz^{d_1}\\
		r_2=y^sw\\
		r_3=x^{k_1}y^sx^{k_2}y^s\cdots x^{k_r}y^s\cdot wz^{d_r}wz^{d_{r-1}}\cdots wz^{d_1}
	\end{cases}\]
\end{theorem}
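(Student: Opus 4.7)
The plan is to apply the Seifert-Van Kampen theorem to the standard genus two Heegaard decomposition of $S^3$, modified so that the knot $K = T(p,q;r,s)$ lies on the Heegaard surface. Start from the genus one torus $T$ carrying the torus knot $T(p,q)$, bounding solid tori $V_1$ and $V_2$. At the location where the designated $r$ adjacent strands bunch together, remove two small disks from $T$ and attach a tube whose core arc $\gamma$ lies in $V_1$, route the $r$ strands through the tube, and add $s$ full twists inside it. The resulting surface $\Sigma$ is a genus two Heegaard surface for $S^3$ carrying $K$, as illustrated in Figure~\ref{fig:TTK_on_surface}. It bounds the handlebodies $H_1 = V_1 \setminus \nu(\gamma)$ and $H_2 = V_2 \cup \nu(\gamma)$, both of genus two.

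Next, apply Seifert-Van Kampen to the exterior $E(K) = (H_1 \cap E(K)) \cup (H_2 \cap E(K))$, with intersection $\Sigma \cap E(K)$. Since $K$ is non-separating on $\Sigma$, the intersection deformation retracts onto a genus one surface with two boundary circles, whose fundamental group is free of rank three; this is the source of the three relations $r_1, r_2, r_3$. Each piece $H_i \cap E(K)$ deformation retracts onto $H_i$, so $\pi_1(H_i \cap E(K))$ is free of rank two. I would take $\pi_1(H_1) = \langle x, y \rangle$ with $x$ a loop around the core of $V_1$ and $y$ a meridian of the arc $\gamma$, and $\pi_1(H_2) = \langle z, w \rangle$ with $z$ a loop around the core of $V_2$ and $w$ a loop running through the $1$-handle $\nu(\gamma)$ from the $V_2$ side.

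Then I would choose three generating loops of $\pi_1(\Sigma \cap E(K))$ and express each as a word in both $\pi_1(H_1)$ and $\pi_1(H_2)$; equating the two expressions yields the three relations. The simplest generator is a small loop encircling the tube between two adjacent strands: on the $H_1$ side it wraps $s$ times around $y$ because of the internal full twists, while on the $H_2$ side it is the meridian of the attached $1$-handle, namely $w^{-1}$. This gives $r_2 = y^s w$. The remaining two generating loops are parallel copies of the two sub-arcs into which $K$ is cut by its tube passages. Each crossing of the $T(p,q)$ portion across a meridian disk of $V_1$ contributes a factor of $x$ in the $H_1$ expression, each analogous crossing of a meridian disk of $V_2$ contributes a factor of $z$ in the $H_2$ expression, and each traversal through the tube contributes $y^s$ on the $H_1$ side and $w$ on the $H_2$ side.

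The exact form of the words $r_1$ and $r_3$ is then dictated by the residue arithmetic set up at the start of the section: the integers $k_i$ and $d_i$ count meridian disk crossings of $V_1$ and $V_2$ respectively between the $i$-th and $(i+1)$-th entry of $K$ into the tube, and the asymmetry at position $m$ in $r_1$ reflects that $Q' = [rq^{-1}]$ lies strictly between $Q_m$ and $Q_{m+1}$. I expect the main obstacle to be precisely this bookkeeping: verifying that the intersection pattern of the $p/q$-sloped strands on $T$ with the two meridian systems really produces the sequences $k_1, \dots, k_m, k'$ and $d_1, \dots, d_m, d'$ in the order listed, with consistent orientation and basepoint conventions.
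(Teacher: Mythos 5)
Your overall strategy is exactly the paper's: place $K$ on a genus two Heegaard surface $\Sigma$, decompose the exterior as $H_1'\cup_{\Sigma'}H_2'$ with $\Sigma'$ a twice-punctured torus, and read off three relations by expressing three generators of the free group $\pi_1(\Sigma')$ in both handlebody groups via Seifert--Van Kampen. Your derivation of $r_2=y^sw$ from the small loop encircling the tube also matches the paper. However, there are two genuine gaps. First, your choice of the remaining two generators of $\pi_1(\Sigma')$ is not well defined for general $r$: the knot passes through the tube $r$ times, so it is cut into $r$ sub-arcs, not two, and ``parallel copies of the two sub-arcs'' does not produce a pair of loops. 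The paper instead takes $c$ to be a single parallel push-off of the \emph{entire} knot on one side, and $a$ to be a loop that runs from the base point into the boundary of the second attaching region and back, accompanying the knot through exactly $Q'=[rq^{-1}]$ passages of the first handle before returning. Getting these two loops right is essential, since $r_1$ and $r_3$ are precisely $i(a)j(a)^{-1}$ and $i(c)j(c)^{-1}$.

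Second, the part you defer as ``bookkeeping'' is in fact the substance of the proof. The paper labels the sectors of $\partial A$ and of the segment $L$ and shows that at the $n$th visit the loop exits from sector $t_n=[(p-1)-(n-1)q]_p$ of $\partial A$ and crosses sector $u_n=[nq]_p$ of $L$; the four cases $t_n<q$ vs.\ $t_n\geq q$ and $u_n<r$ vs.\ $u_n\geq r$ determine whether the segment contributes $xy^s$, $y^s$, $x$, or $1$ to $i(c)$ (and analogously $z^{-1}w^{-1}$ or $z^{-1}$ to $j(c)$). Only after this analysis do the counts $k_i$, $d_i$ (and $k'$, $d'$, with the truncation at $Q'$ for the loop $a$) emerge, and the ordering of the exponents in $r_1$ and $r_3$ is exactly the content of the theorem. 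Without carrying out this sector analysis — or some equivalent tracking of the $p/q$-sloped strands against the two meridian systems — you have not actually established the stated words, only that \emph{some} presentation of this general shape exists.
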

\begin{proof}
First, observe that a twisted torus knot $K=T(p,q;r,s)$ is a \emph{double torus knot}, i.e., it can be placed on a genus two surface $\Sigma$ standardly located in the 3-sphere $S^3$, as depicted in Figure \ref{fig:TTK_on_surface}. Consider the decomposition of $S^3$ into two genus two handlebodies $H_1$ and $H_2$ along $\Sigma$. Let $\nu(K)$ be the small tubular neighborhood of $K$. Denote $H_1'$, $H_2'$, and $\Sigma'$ as the closure of $H_1-\nu(K)$, $H_2-\nu(K)$, and $\Sigma-\nu(K)$, respectively. The knot complement $S^3-\nu(K)$ can then be decomposed into $H_1'$ and $H_2'$, with $H_1'\cap H_2'=\Sigma'$. 
\[
    S^3-\nu(K)=H_1'\cup_{\Sigma'} H_2'
\]

It is straightforward to deduce that $H_1'$ and $H_2'$ are homeomorphic to genus two handlebodies, and $\Sigma'$ is homeomorphic to a torus with two punctures, considering the Euler characteristic of $\Sigma'$ and the fact that $K$ is a nonseparating knot on $\Sigma$. Note that the fundamental groups of $H_1'$ and $H_2'$ are free groups generated by two generators. We denote these as $\pi_1(H_1')\cong\langle x,y\rangle$ and $\pi_1(H_2')\cong\langle z,w\rangle$, where the generating loops are illustrated in Figure \ref{fig:generators}. Meanwhile, the fundamental group of $\Sigma'$ is a free group generated by three generators. We denote the generating loops of $\Sigma'$ by $a$, $b$, and $c$ and will describe them in more detail later. Applying Seifert-Van Kampen's theorem, the knot group of $K$ can be presented as follows:
\[\pi_1(S^3-\nu(K))=\langle x,y,z,w | i(a)j(a)^{-1},i(b)j(b)^{-1},i(c)j(c)^{-1}\rangle,\]
where $i$ and $j$ are the induced homomorphisms by the inclusion maps $\Sigma'\hookrightarrow H_1'$ and $\Sigma'\hookrightarrow H_2'$, respectively.
\begin{figure}[t!]
    \centering   
    \includegraphics[width=\textwidth]{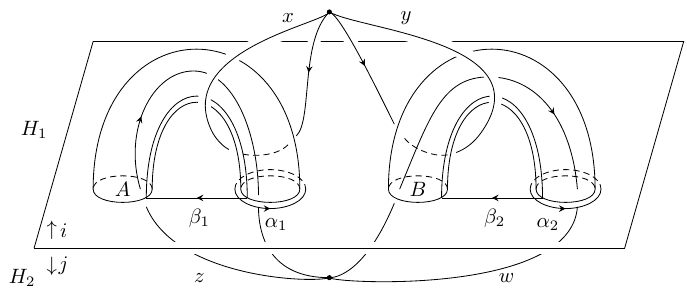}
    \caption{Genus two Heegaard decomposition of $S^3$ and generators of the fundamental group of each handlebody}
    \label{fig:generators}
\end{figure}

Now, we consider the generating loops of $\pi_1(\Sigma')$. Place the knot $K$ on the Heegaard surface as depicted in Figure \ref{fig:knot_on_Heegaard}, where the pairs of disks ($A$, \scalebox{-1}[1]{$A$}) and ($B$, \scalebox{-1}[1]{$B$}) represent the attaching regions of two $1$-handles. For a representation of Heegaard splitting in $\mathbb{R}^2$, see  \cite[Chapter 4.3]{Gompf-Stipsicz-1999}. The $p$ strands of $K$ pass through the boundary of the 1-handle with the attaching regions ($A$, \scalebox{-1}[1]{$A$}), and the $r$ strands of $K$ pass through the boundary of the 1-handle with the attaching region ($B$, \scalebox{-1}[1]{$B$}). Choose a base point on the outer part of $K$ in the diagram. The generator $a$ of $\pi_1(\Sigma')$ is represented by a loop starting at the base point, traveling into the boundary of the right attaching region, $\partial
\scalebox{-1}[1]{A}$, and then directly returning to the base point. Note that the loop must travel while avoiding any contact with $K$. The generator $b$ of $\pi_1(\Sigma')$ is represented by a loop starting at the base point, traveling into the boundary of the right attaching region, $\partial\scalebox{-1}[1]{B}$, and then returning to the base point. The generator $c$ is represented by a parallel copy on the left side of the knot $K$ on $\Sigma'$. The orientation of $c$ is depicted in Figure \ref{fig:knot_on_Heegaard}. It can be verified that these loops generate $\pi_1(\Sigma')$ as they do $H_1(\Sigma';\mathbb{Z})$.

\begin{figure}[t!]
    \centering
    \includegraphics[width=1\textwidth]{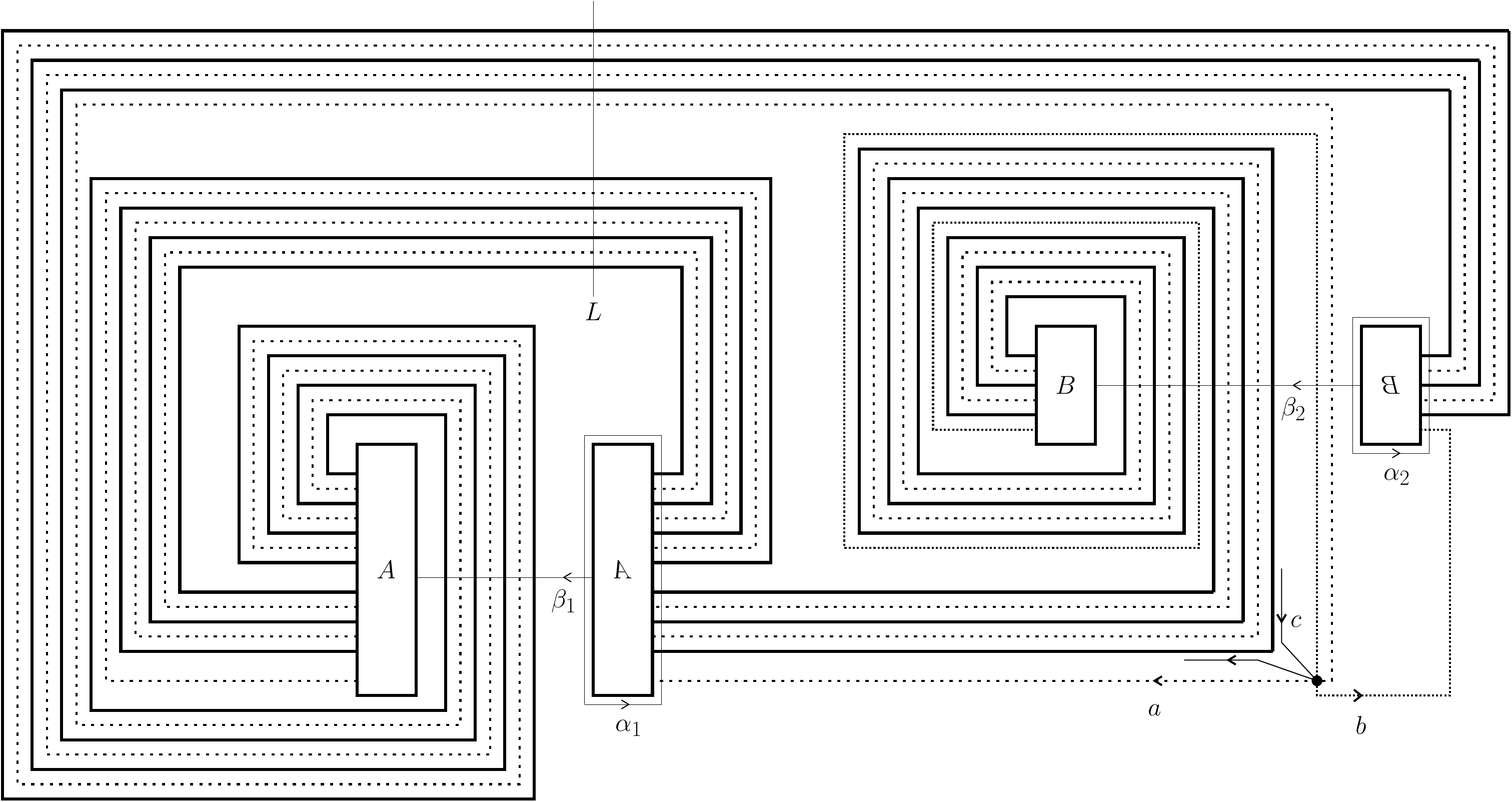}
    \caption{Twisted torus knot $T(7,4;3,2)$ on a Heegaard surface, along with loops $a$, $b$ and $c$ that generate $\pi_1$ of the complement of the knot}
    \label{fig:knot_on_Heegaard}
\end{figure}

Now, we discuss how one can determine the values of the maps $i$ and $j$ at a loop. Note that the generators $x$ and $y$ of $\pi_1(H_1')$ are homotopic to the loops $\alpha_1$ and $\alpha_2$ on $\Sigma$, respectively, while the generators $z$ and $w$ of $\pi_1(H_2')$ are homotopic to the loops $\beta_1$ and $\beta_2$, respectively (see Figure \ref{fig:generators}). To compute the image of a loop under the map $i$, we follow the loop from the base point and observe how it sequentially winds around the attaching regions $A$ and $B$. Alternately, this observation corresponds to tracing how the loop intersects with $\beta_1$ and $\beta_2$. Similarly, the image under $j$ can be determined by tracing how the loop sequentially intersects with $\alpha_1$ and $\alpha_2$.

For instance, consider the loop $b$ which intersects $\beta_2$ $|s|$ times while traveling from the base point and returning to the base point. It is also important to note that the orientation of the intersection with $\beta_2$ matches that of $\alpha_2$ if $s>0$, and is opposite if $s<0$. Therefore, 
\[i(b)=y^s\]
Whereas, $b$ intersects $\alpha_2$ once during its travel, and the orientation of this intersection is opposite to that of $\beta_2$. Hence,
\[j(b)=w^{-1}\]

Now, let us consider $j(c)$. First, note that as traveling while traveling along $c$ from the base
point and returning to the base point, it moves out from $\partial A$ exactly $p$ times. At each time, there are following four cases until it exits again from $A$.
\begin{itemize}
    \item It intersects $\beta_1$ once, then approaches $B$ side, intersects $\beta_2$ $|s|$ times, and then exits from $\partial A$ again.
    \item It directly approaches $B$ side, intersects $\beta_2$ $|s|$ times, and then exits again from $\partial A$ again.
    \item It intersects $\beta_1$ once, and exits from $\partial A$ again.
    \item It exits from $\partial A$ again without intersecting any $\beta_1$ or $\beta_2$.
\end{itemize}
This case is determined by which sector of $\partial A$ $c$ comes out from and which sector of the line segment $L$ $c$ passes through. We label sectors of $\partial A$ and $L$ as depicted in Figure \ref{Sector of dA and L}. Let $t_n$ be the label of the sector of $\partial A$ where $c$ comes out at the $n$th visit to $\partial A$, where let $u_n$ be the label of the sector of $L$ where $c$ passes through at the $n$th visit to 
$L$. It is easy to verify that $t_n=\left[(p-1)-(n-1)q\right]_p$ and $u_n=[q+(n-1)q]_p=[nq]_p$, where $[k]_p$ denotes the residue of $k$ modulo $p$. Then, the word $\xi_n$ in $\pi_1(H_1')$ corresponding from the $n$th to the $(n+1)$th visit of $c$ to $\partial A$ is given by the following.
\begin{equation*}
    \xi_n=
    \begin{cases}
        xy^s&\text{if }t_n < q\text{ and }u_n<r\\
        y^s&\text{if }t_n \geq q\text{ and }u_n< r\\
        x&\text{if }t_n < q\text{ and } u_n\geq r\\
        1&\text{if }t_n \geq q\text{ and } u_n\geq r\\
    \end{cases}
\end{equation*}
Then \[i(c)=\prod_{n=1}^p\xi_n,\] and this corresponds to
\[i(c)=x^{k_1}y^sx^{k_2}y^s\cdots x^{k_r}y^s\]
which is easily followed by the definition of $k_i$. Note that the set $Q$ corresponds to the set of orders of visits of $c$ to $L$ when the label at $L$ is less than $r$, and $k_i$ counts the number of times of visits between $Q_{i-1}$ and $Q_{i}$ when the label at $\partial A$ is $\leq q$. The word for $j(c)$ can be computed similarly to have
\[j(c)=\prod_{n=1}^p{\eta_n},\]
where
\begin{equation*}
    \eta_n=
    \begin{cases}
        z^{-1}w^{-1}&\text{if}\quad u_n<r\\
        z^{-1}&\text{if}\quad u_n\geq r\\
    \end{cases}
\end{equation*}
and hence
\[j(c)=z^{-d_1}w^{-1}\cdots z^{-d_{r-1}}w^{-1}z^{-d_r}w^{-1},\]
by considering the definition of $d_i$.
\begin{figure}[t!]
    \centering
    \includegraphics[width=0.5\textwidth]{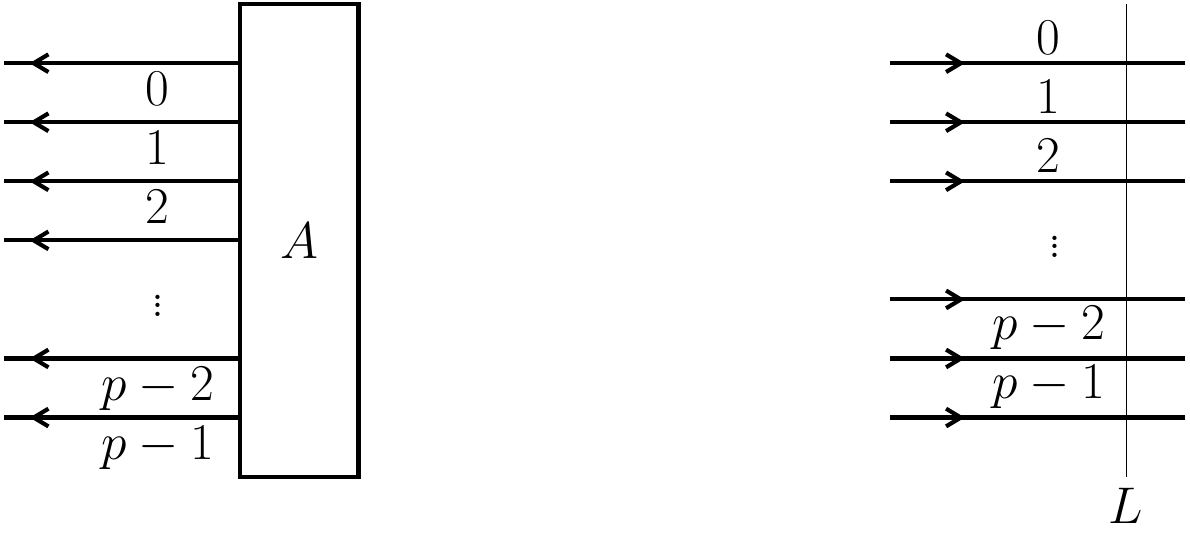}
    \caption{Sectors of $\partial A$  and $L$}
    \label{Sector of dA and L}
\end{figure}

The same argument applies to obtaining $i(a)$ and $j(a)$. However, in this case, the loop $a$ meets $\partial A$ exactly $Q'$ times since $Q':=[rq^{-1}]$ is the solution $n$ of the equation $u_n=r$ and $a$ returns to the base point when it passes through the sector of $L$ labeled $r$. Considering this situation, we have
\[i(a)=\prod_{n=1}^{Q'} \xi_n\cdot y.\]
Hence,
\[i(a)=x^{k_1}y^sx^{k_2}y^s\cdots x^{k_m}y^sx^{k'}y.\]
Similarly, we obtain
\[j(a)=\prod_{n=1}^{Q'} \eta_n.\]
Finally, we have
\[j(a)=z^{-d_1}w^{-1}\cdots z^{-d_{m-1}}w^{-1}z^{-d_m}w^{-1}z^{-d'}\]
Then, our theorem follows from the Seifert-van Kampen theorem.\end{proof}

\section{Alexander Polynomial of Twisted Torus Knots}
\subsection{Fox's free calculus derivative and Alexander polynomial}
Although we have a presentation of the knot group for twisted torus knots, it is generally not straightforward to determine if two given presentations are isomorphic. In \cite{Fox-1953}, Fox introduced a method to derive the Alexander polynomial from a presentation of the knot group. We summarize the procedure here. Refer to Chapter 11 of \cite{Lickorish-1997} for the details.

Let $F$ be the free group of $n$ generators $\langle x_1,x_2,\dots,x_n\rangle$ and $\mathbb{Z}[F]$ be the integral group ring of $F$. The derivative in Fox's free calculus with respect to $x_i$ in $\mathbb{Z}[F]$ is a map $\frac{\partial}{\partial x_i}: F\to \mathbb{Z}[F] $ defined by 
\begin{align} 
    \frac{\partial x_j}{\partial x_i}&=\delta_{i,j},\text{ and}\\
    \frac{\partial (xy)}{\partial x_i}&=\frac{\partial x}{\partial x_i}+x\frac{\partial y}{\partial x_i} \quad\text{for } x,y\in F, \label{fox; def}
\end{align} 
where $\delta_{i,j}$ is the Kronecker delta function. These derivative maps can be uniquely extended to $\mathbb{Z}[F]$. The following identities can be easily deduced for the above.
\begin{lemma}\label{lem:Fox_derivatives}
    Let $x\in F$. We have the following identities of Fox derivatives:
  \begin{align}
      \frac{\partial 1}{\partial x_i}&=0,\label{eq; 3}\\ 
      \frac{\partial x^{-1}}{\partial x_i}&=-x^{-1}\frac{\partial x}{\partial x_i} \label{eq; 4}\\
      \frac{\partial x^n}{\partial x_i}&= \frac{1-x^n}{1-x}\frac{\partial x}{\partial x_i}, \label{eq; 5}
  \end{align}
where
\[
    \frac{1-x^n}{1-x}=
    \begin{cases}
        0 &\text{if }n=0,\\
        \displaystyle\sum_{i=0}^{n-1} x^i &\text{if }n>0,\\
        -\displaystyle\sum_{i=n}^{-1}x^i  &\text{if }n<0.
    \end{cases}
\]
\end{lemma}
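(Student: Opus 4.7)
The plan is to derive each of the three identities directly from the two defining properties of the Fox derivative, proceeding in the listed order so that later parts can invoke earlier ones. Throughout, the key observation is that \eqref{fox; def} is a noncommutative Leibniz rule, so I would be careful to keep the variable order as written.

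For \eqref{eq; 3}, I would apply the product rule to the tautology $1=1\cdot 1$ in $F$. This gives $\frac{\partial 1}{\partial x_i}=\frac{\partial 1}{\partial x_i}+1\cdot\frac{\partial 1}{\partial x_i}=2\frac{\partial 1}{\partial x_i}$, from which $\frac{\partial 1}{\partial x_i}=0$ follows at once.

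For \eqref{eq; 4}, I would differentiate both sides of $x\cdot x^{-1}=1$. The left side expands, by \eqref{fox; def}, to $\frac{\partial x}{\partial x_i}+x\,\frac{\partial x^{-1}}{\partial x_i}$, while the right side vanishes by \eqref{eq; 3}. Solving and left-multiplying by $x^{-1}$ yields the claimed identity.

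For \eqref{eq; 5}, I would split on the sign of $n$. The case $n=0$ is just \eqref{eq; 3}, and $n=1$ is immediate. For $n>1$, I would induct using $\frac{\partial x^n}{\partial x_i}=\frac{\partial x}{\partial x_i}+x\,\frac{\partial x^{n-1}}{\partial x_i}$; the inductive hypothesis collapses this to $(1+x+\cdots+x^{n-1})\frac{\partial x}{\partial x_i}$. For $n<0$, I would write $n=-m$ with $m>0$, apply \eqref{eq; 4} with $x^m$ in place of the base (using that powers of $x$ commute with one another inside $\mathbb{Z}[F]$), and then expand via the already-settled positive case to obtain $-\sum_{j=-m}^{-1} x^j\frac{\partial x}{\partial x_i}$, which is precisely the stated formula. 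No step presents a genuine obstacle; the only points requiring care are the sign bookkeeping in the $n<0$ case and reading $\frac{1-x^n}{1-x}$ as the formal polynomial defined in the statement rather than as a literal quotient in the noncommutative ring $\mathbb{Z}[F]$.
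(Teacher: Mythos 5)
Your proposal is correct and follows essentially the same route as the paper: \eqref{eq; 3} from differentiating $1=1\cdot 1$, \eqref{eq; 4} from differentiating a product of $x$ with its inverse, and \eqref{eq; 5} by induction on positive $n$. The only (cosmetic) difference is in the case $n<0$, where you reduce to the positive case by applying \eqref{eq; 4} to $x^{m}$, whereas the paper runs a second, downward induction via $x^{n-1}=x^{n}x^{-1}$; both are equally valid and the sign bookkeeping in your version checks out.
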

\begin{proof}
The identity (\ref{eq; 3}) directly follows from (\ref{fox; def}) by setting $x=1$ and $y=1$. The identity (\ref{eq; 4}) can be obtained from the following equation:
\begin{equation*}
    0=\frac{\partial 1}{\partial x_i}=\frac{\partial (x^{-1}x)}{\partial x_i}=\frac{\partial x^{-1}}{\partial x_i}+x^{-1}\frac{\partial x}{\partial x_i} 
\end{equation*}
The identity (\ref{eq; 5}) can be obtained by induction on $|n|$. For $n=0,1,$ and $-1$, it is clear that the property (\ref{eq; 5}) is satisfied. Assuming the induction hypothesis holds for some $n$, then by the identity (\ref{fox; def}), we have
\begin{equation*}
    \frac{\partial x^{n+1}}{\partial x_i}=\frac{\partial (x^nx)}{\partial x_i}=\frac{\partial x^n}{\partial x_i}+x^n\frac{\partial x}{\partial x_i} = {\displaystyle \sum_{i=1}^{n-1}} x^i\frac{\partial x}{\partial x_i} +x^n\frac{\partial x}{\partial x_i} = \frac{1-x^{n+1}}{1-x} \frac{\partial x}{\partial x_i}
\end{equation*}

Similarly, if $n<-1$,
\begin{equation*}
  \frac{\partial x^{n-1}}{\partial x_i}=\frac{\partial (x^nx^{-1})}{\partial x_i}= \frac{\partial x^n}{\partial x_i}+x^n\frac{\partial x^{-1}}{\partial x_i} = -{\displaystyle \sum_{i=n}^{-1}} x^i\frac{\partial x}{\partial x_i} -x^{n-1}\frac{\partial x}{\partial x_i} = \frac{1-x^{n-1}}{1-x}\frac{\partial x}{\partial x_i}
\end{equation*}
\end{proof}

Let $G=\langle x_1,x_2,\dots, x_n | r_1,r_2,\dots,r_{n-1}\rangle$ be a presentation of the knot group of a knot $K$. Let $\phi: \mathbb{Z}[F]\to \mathbb{Z}[G]$ be the ring homomorphism induced from the canonical map $F\to G$. Consider the following Jacobian matrix $J$ of order $(n-1)\times n$:
\[J=\phi\left(\frac{\partial r_i}{\partial x_j}\right).\]
Let $\alpha\colon \mathbb{Z}[G]\to H_1(S^3-K;\mathbb{Z})\cong \mathbb{Z}\langle t\rangle$ be the map induced from the abelianization map from $G$ to $H_1(S^3-K;Z)$, and let $\Tilde{\alpha}\colon \mathbb{Z}[G]\to \mathbb{Z}[t,t^{-1}]$ be the induced map from $\alpha$. We also denote $\tilde{\alpha}=\alpha$. The image of $J$ under the abelianization map $\alpha (J)=\alpha\phi\left(\frac{\partial r_i}{\partial x_j}\right)$ is called the \textit{Alexander matrix} of $G$.
 
Let $m_1,m_2,\dots,m_{n-1}$ be the $(n-1)\times (n-1)$ minors of $\alpha(J)$. The ideal generated by  $m_1,m_2,\dots,m_{n-1}$ is known to be a principal ideal, and its generator is the \emph{Alexander polynomial} $\Delta_K(t)$ of $K$, up to multiplication by units $\pm t^{ k}$ for some $k\in \mathbb{Z}$.

\subsection{Alexander polynomial of twisted torus knots}
In this section, we compute the Alexander polynomial of twisted torus knots by applying Fox calculus to the presentation of the knot group described in Section \ref{sec:fund}.

Recall the presentation of the knot group of $K=T(p,q;r,s)$ from Theorem \ref{thm:fund}:
\[\pi_1(S^3-K)=\langle x, y, z, w|r_1, r_2, r_3\rangle.\] 
We also carry all notation introduced in Section \ref{sec:fund}. Before proceeding further, note the following identities, which will be useful later.
\begin{lemma}\label{lem:identity}
    For each $i=0,\dots,r-1$, if $Q_i=[nq^{-1}]$ for some $n\in\{0,\dots, r-1\}$, then
    \[
        Q_iq=\overline{k_i}p+n,
    \] 
    and 
    \[
        Q'q=\overline{k'}p+r.
    \]
\end{lemma}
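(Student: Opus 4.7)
The plan is to unify both identities through an alternative description of the set $R$. I would first prove that $R = \{l \in \{0, 1, \dots, p-1\} : [lq] \ge p-q\}$. This holds because $l \in R$ iff $l = [-nq^{-1}]$ for some $n \in \{1,\dots,q\}$, iff $lq \equiv -n \pmod{p}$ for such $n$, iff $[lq] = p - n \in \{p-q, \dots, p-1\}$.

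Since $Q_0 = 0 < Q_1 < \dots < Q_{r-1}$, the intervals $[Q_{j-1}, Q_j)$ for $j = 1, \dots, i$ partition $[0, Q_i)$, and hence the definitions of $k_j$ and $\overline{k_i}$ yield
\[
\overline{k_i} = \sum_{j=1}^{i} k_j = |\{l \in R : 0 \le l < Q_i\}| = |\{l : 0 \le l < Q_i,\ [lq] \ge p-q\}|.
\]
By hypothesis $Q_i q \equiv n \pmod{p}$, so writing $K_i := \lfloor Q_i q / p \rfloor$ we have $Q_i q = K_i p + n$. The first identity will therefore follow if I can show $K_i = \overline{k_i}$, i.e., that $K_i$ equals the cardinality in the last display.

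This equality is a telescoping wrap-around count. As $l$ runs from $0$ to $Q_i - 1$, one has $[(l+1)q] = [lq] + q - \varepsilon_l p$, where $\varepsilon_l \in \{0,1\}$ equals $1$ precisely when $[lq] \ge p - q$. Summing from $l=0$ to $l=Q_i-1$ gives $[Q_i q] = Q_i q - p \sum_{l=0}^{Q_i - 1} \varepsilon_l$, so $\sum_l \varepsilon_l = (Q_i q - [Q_i q])/p = K_i$. On the other hand, $\sum_l \varepsilon_l$ is manifestly the cardinality $|\{l \in \{0,\dots,Q_i - 1\} : [lq] \ge p-q\}|$. Combining, $K_i = \overline{k_i}$, which proves the first identity.

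The second identity, $Q'q = \overline{k'} p + r$, follows from exactly the same argument applied to $Q' = [rq^{-1}]$: the intervals $[Q_0, Q_1), \dots, [Q_{m-1}, Q_m), [Q_m, Q')$ partition $[0, Q')$, so $\overline{k'} = k_1 + \dots + k_m + k' = |\{l \in R : 0 \le l < Q'\}|$, and the same wrap-around count yields $\overline{k'} = \lfloor Q'q/p \rfloor$; together with $Q'q \equiv r \pmod{p}$ this gives $Q'q = \overline{k'}p + r$. The hard part of the proof, if any, is recognizing the two equivalent descriptions (elements of $R$ below $Q_i$ versus wrap-around count in the orbit of $q$) of the same quantity; once that dictionary is in hand, both identities are immediate.
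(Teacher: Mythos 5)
Your proof is correct, and it rests on the same underlying idea as the paper's: reduce both identities to the statement that $\lfloor Q_iq/p\rfloor$ (resp.\ $\lfloor Q'q/p\rfloor$) equals the number of elements of $R$ in $[0,Q_i)$ (resp.\ $[0,Q')$), the latter being $\overline{k_i}$ (resp.\ $\overline{k'}$) because $Q_0=0$ and the intervals $[Q_{j-1},Q_j)$ tile $[0,Q_i)$. Where you differ is in the bookkeeping for that count: the paper argues only the $Q'$ case and exhibits a bijection $v\mapsto m_v$ between $\{1,\dots,\lfloor Q'q/p\rfloor\}$ and $R\cap(0,Q')$ by taking $m_vq$ to be the greatest multiple of $q$ below $vp$, whereas you first rewrite $R=\{l: [lq]\ge p-q\}$ and then telescope the carry recursion $[(l+1)q]=[lq]+q-\varepsilon_lp$, so that the wrap-around count falls out of a single summation. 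The two counts are dual (one indexes by multiples of $p$ crossed, the other by steps of the orbit of $q$), so this is a variant rather than a new method, but your version has the advantages of being uniform in $Q_i$ and $Q'$ — the paper proves only the second identity and asserts the rest is similar — and of making the membership criterion for $R$ explicit, which is the step the paper leaves implicit.
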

\begin{proof}
    We provide a proof of the last identity, as the others follow in a similar manner. Recall that $Q'=[rq^{-1}]$ and $\overline{k'}$ denote the number of elements in \[
    \{[-\ell q^{-1}]|\ell=1,\dots,q,\, [-\ell q^{-1}]<Q'\}.\]
    Observe that $Q'$ is the minimal positive integer $m$ such that $mq\equiv r$ modulo $p$. Hence, there exists a positive integer $t$ such that $Q'q=tp+r$. We claim that $t=\overline{k'}$. For each $1\leq v\leq t$, we have $m_vq$, which is the greatest multiple of $q$ less than $vp$. Therefore, since $1\leq vp-m_vq\leq q$, $t$ equals the number of solutions of $x$ such that $0<x<Q'$ and $x=[-\ell q^{-1}]$ for some $1\leq \ell\leq q$, which corresponds to $\overline{k'}$.

    % Recall that $Q_i=[nq^{-1}]$ for some $n\in\{0,\dots, r-1\}$ and $\overline{k_i'}$ denote the number of elements in \[
    % \{[-kq^{-1}]|k=1,\dots,q,\, [-kq^{-1}]<Q_i\}.\]
    % Observe that $Q_i$ is the minimal positive integer $m$ such that $mq\equiv $ modulo $p$. Hence, there exists a positive integer $t$ such that $Q'q=tp+r$. We claim that $t=\overline{k'}$. For each $1\leq v\leq t$, we have $m_vq$, which is the greatest multiple of $q$ less than $vp$. Therefore, since $1\leq vp-m_vq\leq q$, $t$ equals to the number of solutions of $x$ such that $0<x<Q'$ and $x=[-nq^{-1}]$ for some $1\leq n\leq q$, which corresponds to $\overline{k'}$.
\end{proof}

By taking Fox's derivatives to $r_1, r_2,$ and $r_3$ with respect to $x$,$y$,$z$, and $w$ using Lemma \ref{lem:Fox_derivatives}, we obtain
\begin{align*}
    \frac{\partial r_1}{\partial x}&=\frac{1-x^{k_1}}{1-x}+x^{k_1}y^s\frac{1-x^{k_2}}{1-x}+\dots+x^{k_1}y^s\dots x^{k_{m-1}}y^s\frac{1-x^{k_m}}{1-x}+x^{k_1}y^s\dots x^{k_{m}}y^s\frac{1-x^{k'}}{1-x},\\
    \frac{\partial r_1}{\partial y}&=\left(x^{k_1}+x^{k_1}y^s+\dots+x^{k_1}y^s\dots x^{k_m}\right)\frac{1-y^s}{1-y}+x^{k_1}y^s\dots x^{k_m}y^sx^{k'},\\
    \frac{\partial r_1}{\partial z}&=i(a)\left(\frac{1-z^{d'}}{1-z}+z^{d'}w\frac{1-z^{d_m}}{1-z}+z^{d'}wz^{d_m}w\frac{1-z^{d_{m-1}}}{1-z}+\dots+z^{d'}wz^{d_m}w\dots w\frac{1-z^{d_1}}{1-z}\right),\\
    \frac{\partial r_1}{\partial w}&=i(a)\left(1+wz^{d_m}+wz^{d_m}wz^{d_{m-1}}+\dots+wz^{d_m}\cdots wz^{d_2}\right),\\
\end{align*}
\begin{equation*}
    \frac{\partial r_2}{\partial x}=0,\quad
    \frac{\partial r_2}{\partial y}=\frac{1-y^s}{1-y},\quad
    \frac{\partial r_2}{\partial z}=0,\quad
    \frac{\partial r_2}{\partial w}=y^s,
\end{equation*}
\begin{align*}
    \frac{\partial r_3}{\partial x}&=\frac{1-x^{k_1}}{1-x}+x^{k_1}y^s\frac{1-x^{k_2}}{1-x}+\dots+x^{k_1}y^s\cdots x^{k_{r-1}}y^s\frac{1-x^{k_r}}{1-x},\\
    \frac{\partial r_3}{\partial y}&=\left(x^{k_1}+x^{k_1}y^sx^{k_2}+\dots+x^{k_1}y^s\cdots x^{k_{r-1}}y^sx^{k_{r}}\right)\frac{1-y^s}{1-y},\\
    \frac{\partial r_3}{\partial z}&=i(c)\left(w\frac{1-z^{d
    _r}}{1-z}+wz^{d_r}w\frac{1-z^{d_{r-1}}}{1-z}+\dots+wz^{d_r}wz^{d_{r-1}}w\cdots z^{d_2}w\frac{1-z^{d_1}}{1-z}\right),\text{ and}\\
    \frac{\partial r_3}{\partial w}&=i(c)\left(1+wz^{d_{r}}+wz^{d_{r}}wz^{d_{r-1}}+\dots+wz^{d_{r}}\cdots wz^{d_2}\right),
\end{align*}
where $i(a)$ and $i(c)$ are words computed in the proof of Theorem \ref{thm:fund}.

Note that $\alpha(x)=t^p$, $\alpha(y)=t^r$, $\alpha(z)=t^{-q}$, $\alpha(w)=t^{-rs}$ by taking a generator $t$ of $H_1(S^3-K;\mathbb{Z})$ represented by a meridian of $K$ and considering how we choose $x$, $y$, $z$, and $w$ in Figure \ref{fig:generators}. By applying the map $\alpha$ to each derivative, we have:
\begin{align*}
    \alpha\frac{\partial r_1}{\partial x}&=\frac{1}{1-t^p}\left\{1-(1-t^{rs})\sum_{i=1}^m t^{\bar{k_i}p+(i-1)rs}-t^{\bar{k'}p+mrs}\right\}=:\frac{\tilde{X}}{1-t^p},\\
    \alpha\frac{\partial r_1}{\partial y}&=\frac{1-t^{rs}}{1-t^r}\sum_{i=1}^m t^{\overline{k_i}p+(i-1)rs}+t^{\overline{k'}p+mrs}=\frac{1-\tilde{X}-t^{\overline{k'}p+mrs}}{1-t^r}+t^{\overline{k'}p+mrs},\\
    \alpha\frac{\partial r_1}{\partial z}&=\frac{t^{\overline{k'}p+(1-Q')q+r}}{1-t^q}\left\{1-(1-t^{rs})\sum_{i=1}^{m}t^{Q_iq+(i-1)rs}-t^{Q'q+mrs}\right\}=:\frac{t^q}{1-t^q}\tilde{Y},\text{ and}\\
    \alpha\frac{\partial r_1}{\partial w}&=t^{rs}\sum_{i=1}^{m}t^{Q_{i}q+(i-1)rs}=\frac{t^{rs}}{1-t^{rs}}\left(1-\tilde{Y}-t^{Q'q+mrs}\right).
\end{align*}
where we applied Lemma \ref{lem:identity} for the third identity. Similarly, we have
\begin{align*}
    \alpha\frac{\partial r_2}{\partial x}=\alpha\frac{\partial r_2}{\partial z}=0,\quad\alpha\frac{\partial r_2}{\partial y}=\frac{1-t^{rs}}{1-t^r},\quad\text{and}\quad\alpha\frac{\partial r_2}{\partial w}=t^{rs}, 
\end{align*}
and
\begin{align*}
    \alpha\frac{\partial r_3}{\partial x}&=\frac{1}{1-t^p}\left\{1-(1-t^{rs})\sum_{i=1}^{r-1}t^{\overline{k_i}p+(i-1)rs}-t^{pq+(r-1)rs}\right\}=\frac{1}{1-t^p}X,\\
    \alpha\frac{\partial r_3}{\partial y}&=\frac{1-t^{rs}}{1-t^r}\sum_{i=1}^{r} t^{\overline{k_i}p+(i-1)rs}=\frac{1}{1-t^r}(1-X-t^{pq+r^2s}),\\
    \alpha\frac{\partial r_3}{\partial z}&=\frac{t^{q}}{1-t^q}\left\{1-(1-t^{rs})\sum_{i=1}^{r-1}t^{Q_iq+(i-1)rs}-t^{pq+(r-1)rs}\right\}=\frac{t^{q}}{1-t^q}Y,\text{ and}\\
    \alpha\frac{\partial r_3}{\partial w}&=\sum_{i=1}^{r}t^{Q_iq+irs}=\frac{t^{rs}}{1-t^{rs}}\left(1-Y-t^{pq+r^2s}\right),
\end{align*}
where we define $Q_r=p$. 

Now, consider the following Alexander matrix of $\pi_1(S^3-K)$.
\renewcommand{\arraystretch}{2.5}
\[\begin{pmatrix}
    \alpha\frac{\partial r_1}{\partial x} & \alpha\frac{\partial r_1}{\partial y} & \alpha\frac{\partial r_1}{\partial z} & \alpha\frac{\partial r_1}{\partial w} \\
    0 & \alpha\frac{\partial r_2}{\partial y} & 0 & \alpha\frac{\partial r_2}{\partial w} \\
    \alpha\frac{\partial r_3}{\partial x} & \alpha\frac{\partial r_3}{\partial y} & \alpha\frac{\partial r_3}{\partial z} & \alpha\frac{\partial r_3}{\partial w}
\end{pmatrix}\]
Then, the minors $m_1$, $m_2$, $m_3$ and $m_4$ of the above will be the following: 
\begin{align*}
    m_1&=\alpha\frac{\partial r_1}{\partial z}\left(\alpha\frac{\partial r_2}{\partial y}\alpha\frac{\partial r_3}{\partial w}- \alpha\frac{\partial r_2}{\partial w}\alpha\frac{\partial r_3}{\partial y}\right)+\alpha\frac{\partial r_3}{\partial z} \left(\alpha\frac{\partial r_1}{\partial y}\alpha\frac{\partial r_2}{\partial w}-\alpha\frac{\partial r_1}{\partial w}\alpha\frac{\partial r_2}{\partial y}\right)\\
    m_2&=\alpha\frac{\partial r_2}{\partial w}\left(\alpha\frac{\partial r_1}{\partial x}\alpha\frac{\partial r_3}{\partial z}-\alpha\frac{\partial r_1}{\partial z}\alpha\frac{\partial r_3}{\partial x}\right)\\
    m_3&=\alpha\frac{\partial r_2}{\partial y}\left(\alpha\frac{\partial r_1}{\partial x}\alpha\frac{\partial r_3}{\partial w}- \alpha\frac{\partial r_1}{\partial w}\alpha\frac{\partial r_3}{\partial x}\right)-\alpha\frac{\partial r_2}{\partial w} \left(\alpha\frac{\partial r_1}{\partial x}\alpha\frac{\partial r_3}{\partial y}-\alpha\frac{\partial r_1}{\partial y}\alpha\frac{\partial r_3}{\partial x}\right)\\
    m_4&=\alpha\frac{\partial r_2}{\partial y}\left(\alpha\frac{\partial r_1}{\partial x}\alpha\frac{\partial r_3}{\partial z}-\alpha\frac{\partial r_1}{\partial z}\alpha\frac{\partial r_3}{\partial x}\right)
\end{align*}
It is straightforward to observe that $t^{rs}(1-t^r)m_4=(1-t^{rs})m_2$. We further claim that similar identities exist for $m_1$ and $m_3$, each relating to $m_2$. 

\begin{lemma}\label{lem:minors}
    $(1-t^r)m_1=-(1-t^p)m_2$ and $t^q(1-t^r)m_3=-(1-t^q)m_2$.
\end{lemma}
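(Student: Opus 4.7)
The plan is to deduce both identities from the fundamental Fox calculus formula
\[ u - 1 = \sum_j \frac{\partial u}{\partial x_j}(x_j - 1), \]
applied to each of the relators $r_1, r_2, r_3$. Since $\phi(r_i) = 1$ in the knot group, applying $\alpha$ yields the following column relation of the Alexander matrix, valid for each $i = 1, 2, 3$:
\[ (1-t^p)\,\alpha\frac{\partial r_i}{\partial x} + (1-t^r)\,\alpha\frac{\partial r_i}{\partial y} + (1-t^{-q})\,\alpha\frac{\partial r_i}{\partial z} + (1-t^{-rs})\,\alpha\frac{\partial r_i}{\partial w} = 0. \]
This dependence among the four columns forces proportionalities (with alternating signs) among the four $3 \times 3$ minors; the relation $t^{rs}(1-t^r) m_4 = (1-t^{rs}) m_2$ noted just above the lemma is one such instance, and the two identities in the lemma are the remaining two. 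I will verify them directly from this column relation.

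For the first identity, use the column relation to solve for $(1 - t^p)\,\alpha\frac{\partial r_i}{\partial x}$ (for $i = 1, 3$), and substitute into
\[ (1-t^p)\, m_2 = t^{rs}\left[(1-t^p)\,\alpha\frac{\partial r_1}{\partial x} \cdot \alpha\frac{\partial r_3}{\partial z} - \alpha\frac{\partial r_1}{\partial z} \cdot (1-t^p)\,\alpha\frac{\partial r_3}{\partial x}\right]. \]
The $(1 - t^{-q})$-contributions cancel by antisymmetry, and the remainder simplifies to
\[ -t^{rs}(1-t^r)\, Q - (1-t^{rs})\, P, \]
where $P := \alpha\frac{\partial r_1}{\partial z}\,\alpha\frac{\partial r_3}{\partial w} - \alpha\frac{\partial r_1}{\partial w}\,\alpha\frac{\partial r_3}{\partial z}$ and $Q := \alpha\frac{\partial r_1}{\partial y}\,\alpha\frac{\partial r_3}{\partial z} - \alpha\frac{\partial r_1}{\partial z}\,\alpha\frac{\partial r_3}{\partial y}$. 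On the other hand, regrouping the formula for $m_1$ in the text gives $m_1 = \alpha\frac{\partial r_2}{\partial y} \cdot P + \alpha\frac{\partial r_2}{\partial w} \cdot Q$, and substituting the known values $\alpha\frac{\partial r_2}{\partial y} = (1-t^{rs})/(1-t^r)$ and $\alpha\frac{\partial r_2}{\partial w} = t^{rs}$ yields $(1-t^r)\, m_1 = (1-t^{rs})\, P + t^{rs}(1-t^r)\, Q$, which is the negative of the displayed expression. Hence $(1-t^r) m_1 = -(1-t^p) m_2$.

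The second identity follows by the symmetric manipulation: multiply the column relation by $-t^q$ to isolate $(1-t^q)\,\alpha\frac{\partial r_i}{\partial z}$ as $t^q$ times a combination of the other three Fox derivatives, substitute into $(1-t^q) m_2$ so that the $(1-t^p)$-contributions cancel, and compare with the regrouped $m_3 = \alpha\frac{\partial r_2}{\partial y} \cdot D - \alpha\frac{\partial r_2}{\partial w} \cdot E$, where $D := \alpha\frac{\partial r_1}{\partial x}\,\alpha\frac{\partial r_3}{\partial w} - \alpha\frac{\partial r_1}{\partial w}\,\alpha\frac{\partial r_3}{\partial x}$ and $E := \alpha\frac{\partial r_1}{\partial x}\,\alpha\frac{\partial r_3}{\partial y} - \alpha\frac{\partial r_1}{\partial y}\,\alpha\frac{\partial r_3}{\partial x}$. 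The main obstacle is bookkeeping rather than conceptual: because $m_1, m_2, m_3$ are partial cofactor expansions along the two-zero row for $r_2$, the signs are easy to mishandle; but with the dictionary $(P, Q) \leftrightarrow (D, E)$ in place, both identities ultimately reduce to the single column relation stated at the outset.
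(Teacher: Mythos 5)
Your proof is correct, and it takes a genuinely different route from the paper. The paper proves the lemma by brute force: it substitutes the explicit closed forms of all the entries $\alpha\frac{\partial r_i}{\partial x_j}$ (the expressions in $X$, $\tilde{X}$, $Y$, $\tilde{Y}$ computed just before the lemma), simplifies $m_1$, $m_2$, $m_3$ with the help of Lemma \ref{lem:identity}, and reads off the proportionalities from the resulting closed forms. You instead derive the same proportionalities from the fundamental formula $u-1=\sum_j \frac{\partial u}{\partial x_j}(x_j-1)$ of Fox calculus, which, since $\phi(r_i)=1$, gives the column dependence $\sum_j \alpha\frac{\partial r_i}{\partial x_j}\,(1-\alpha(x_j))=0$; the only explicit entries you need are those of the $r_2$ row. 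I checked your substitutions: the $(1-t^{-q})$ (resp. $(1-t^p)$) contributions do cancel by antisymmetry, the identities $(1-t^r)m_1=(1-t^{rs})P+t^{rs}(1-t^r)Q$ and $(1-t^r)m_3=(1-t^{rs})D-t^{rs}(1-t^r)E$ match the paper's sign conventions for the minors, and both identities follow; the signs of $m_3 = b_y D - b_w E$ versus $m_1 = b_y P + b_w Q$ are handled correctly. Your argument is shorter, more conceptual, and explains \emph{why} the minors must be proportional to $1-\alpha(x_j)$ for any presentation (the same mechanism behind the paper's unproved remark that $t^{rs}(1-t^r)m_4=(1-t^{rs})m_2$), whereas the paper's computation has the side benefit of producing the explicit expression $m_2=\frac{t^{q+rs}}{(1-t^p)(1-t^q)}(\tilde{X}Y-\tilde{Y}X)$ (Equation (\ref{eq:m_2})), which is not part of the lemma statement but is invoked later in the proof of Theorem \ref{thm:Alex}; if your proof replaced the paper's, that computation of $m_2$ would still have to be done separately. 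The one background fact you use without proof, the fundamental formula, is not stated in the paper but follows by an immediate induction from the defining identity (\ref{fox; def}) and is standard (it is in the cited Chapter 11 of Lickorish), so this is not a gap.
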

\begin{proof}
Note that
\begin{equation}\label{eq:m_2}
    m_2=\frac{t^{q+rs}}{(1-t^p)(1-t^q)}\left(\tilde{X}Y-\tilde{Y}X\right)
\end{equation}
and
\begin{align*}
    m_1&=\frac{t^q}{1-t^q}\tilde{Y}\left\{\frac{t^{rs}}{1-t^r}\left(1-Y-t^{pq+r^2s}\right)-\frac{t^{rs}}{1-t^r}(1-X-t^{pq+r^2s})\right\}\\
    &\quad + \frac{t^{q}}{1-t^q}Y\left\{t^{rs}\left(\frac{1-\tilde{X}-t^{\overline{k'}p+mrs}}{1-t^r}+t^{\overline{k'}p+mrs}\right)-\frac{t^{rs}}{1-t^r}\left(1-\tilde{Y}-t^{Q'q+mrs}\right)\right\}\\
    &=\frac{t^{q+rs}}{(1-t^q)(1-t^r)}(\tilde{Y}X-\tilde{X}Y).
\end{align*}
Note that we applied Lemma \ref{lem:identity} for the last equality. Hence, we get the identity \[
(1-t^r)m_1=-(1-t^p)m_2.\] 
Similarly, we can verify that
\begin{equation*}
    m_3=\frac{t^{rs}}{(1-t^p)(1-t^r)}(X\tilde{Y}-Y\tilde{X}),
\end{equation*}
and so $t^q(1-t^r)m_3=-(1-t^q)m_2$.
\end{proof}

Now, we prove Theorem \ref{thm:Alex}.
\begin{proof}[Proof of Theorem \ref{thm:Alex}]
Let $\displaystyle D(t) := \frac{(1-t)}{(1-t^r)}m_2$. By Lemma \ref{lem:minors}, we have
\begin{equation*}
    \frac{m_1}{D(t)} = \frac{1-t^p}{1-t}, \quad
    \frac{m_2}{D(t)} = \frac{1-t^r}{1-t}, \quad
    \frac{m_3}{D(t)} = \frac{1-t^q}{1-t}, \quad\text{and}\quad \frac{m_4}{D(t)} = \frac{1-t^{rs}}{1-t}.
\end{equation*}
Since $p$ and $q$ are relatively prime integers, it follows that
\[\gcd\left(\frac{1-t^p}{1-t}, \frac{1-t^q}{1-t}\right) = 1.\]
Therefore, we deduce that
\[\gcd\left(\frac{m_1}{D(t)}, \frac{m_2}{D(t)}, \frac{m_3}{D(t)}, \frac{m_4}{D(t)}\right) = 1.\]
Hence, the Alexander polynomial of $T(p,q;r,s)$, denoted by $\Delta_{T(p,q;r,s)}(t)$, is given by
\[\Delta_{T(p,q;r,s)}(t) = D(t) = \frac{1-t}{(1-t^p)(1-t^q)(1-t^r)}\left(\tilde{X}Y - X\tilde{Y}\right),\]
by Equation (\ref{eq:m_2}).
\end{proof}

\section{Degree of the Alexander polynomial of Twisted Torus Knots}\label{sec:Degree}
In this section, we discuss the degree of the Alexander polynomial of twisted torus knots. For positive values of $s$, the degree of Alexander polynomial is easily determined by the tuple $(p,q;r,s)$.
\begin{proposition}\label{prop:degree1}
    If $s>0$, 
    \[
        \deg(\Delta_{T(p,q;r,s)})=(p-1)(q-1)+(r-1)rs.
    \]
\end{proposition}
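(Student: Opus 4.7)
The plan is to extract the degree of $\Delta_{T(p,q;r,s)}(t)$ directly from the closed form in Theorem \ref{thm:Alex}. Writing $\Delta(t)=P(t)/D(t)$ with $P=(1-t)(\tilde{X}Y-X\tilde{Y})$ and $D=(1-t^p)(1-t^q)(1-t^r)$, the relation $D\cdot\Delta=P$, combined with the fact that $D$ has constant term $1$ and leading monomial $-t^{p+q+r}$, gives $\deg_{\text{top}}\Delta=\deg_{\text{top}} P-(p+q+r)$ and $\deg_{\text{low}}\Delta=\deg_{\text{low}} P$. Hence the proposition reduces to pinning down the highest and lowest monomials of $\tilde{X}Y-X\tilde{Y}$; the factor $(1-t)$ merely shifts the top exponent up by $1$ and leaves the bottom unchanged. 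The hypothesis $s>0$ is essential here: it makes every exponent appearing in $X,\tilde{X},Y,\tilde{Y}$ non-negative, so both extremes can be located directly.

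For the top exponent, I read off the leading monomials: $-t^{pq+(r-1)rs}$ in both $X$ and $Y$, $-t^{\overline{k'}p+mrs}$ in $\tilde{X}$, and $-t^{Q'q+mrs}$ in $\tilde{Y}$; these formulas are uniform in $m\ge 0$, since the $m=0$ expressions agree with the generic ones when the sum is taken to be empty. By Lemma \ref{lem:identity} one has $Q'q=\overline{k'}p+r$, so the leading exponent of $X\tilde{Y}$ strictly exceeds that of $\tilde{X}Y$, by exactly $r$. Consequently the leading monomial of $\tilde{X}Y-X\tilde{Y}$ is $-t^{pq+Q'q+(r-1+m)rs}$, and multiplying by $(1-t)$ and subtracting $p+q+r$ yields $\deg_{\text{top}}\Delta=(p-1)(q-1)+\overline{k'}p+(r-1+m)rs$.

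For the bottom exponent, I rewrite $\tilde{X}Y-X\tilde{Y}=(\tilde{X}-X)Y-X(\tilde{Y}-Y)$, and use that $X$ and $Y$ have constant term $1$, so the lowest monomial of the product equals the lowest monomial among $\tilde{X}-X$ and $-(\tilde{Y}-Y)$. A direct expansion gives $\tilde{X}-X=-t^{\overline{k'}p+mrs}+\cdots$ and $\tilde{Y}-Y=-t^{Q'q+mrs}+\cdots$. The key step is verifying that these really are the smallest-exponent monomials of the two differences; this reduces to the inequality $\overline{k_{m+1}}>\overline{k'}$, which follows from $Q_{m+1}>Q'$ together with Lemma \ref{lem:identity}, plus the easy bound $\overline{k'}p<pq$ to rule out the residual monomial $t^{pq+(r-1)rs}$. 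Since $Q'q+mrs=\overline{k'}p+r+mrs$ strictly exceeds $\overline{k'}p+mrs$, the lowest monomial of $\tilde{X}Y-X\tilde{Y}$ is $-t^{\overline{k'}p+mrs}$, and therefore $\deg_{\text{low}}\Delta=\overline{k'}p+mrs$. Subtracting the two extremes gives $\deg\Delta=(p-1)(q-1)+(r-1)rs$.

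I expect the main obstacle to be the bookkeeping for the bottom exponent: one has to rule out cancellations and compare several candidate exponents, while handling the case split $m=0$ versus $m>0$ present in the definitions of $\tilde{X}$ and $\tilde{Y}$ -- even though the conclusion turns out to be uniform in $m$, each case requires a brief separate verification.
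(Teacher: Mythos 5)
Your proof is correct and follows essentially the same route as the paper: both locate the top exponent of $\tilde{X}Y-X\tilde{Y}$ directly (using $Q'q=\overline{k'}p+r$ from Lemma \ref{lem:identity} to rule out cancellation) and locate the bottom exponent via the decomposition $X=\tilde{X}-A$, $Y=\tilde{Y}-B$, arriving at the extremes $pq+Q'q+(r-1+m)rs$ and $\overline{k'}p+mrs$. Your added verification that $\overline{k'}<\overline{k_{m+1}}$ (so that $\overline{k'}p+mrs$ really is the minimal exponent of $\tilde{X}-X$) is a detail the paper leaves implicit, but it is the same argument.
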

\begin{proof}
We consider the following Alexander polynomial
\[
    \Delta_{T(p,q;r,s)}(t) = \frac{1-t}{(1-t^p)(1-t^q)(1-t^{r})}(\tilde{X}Y - X\tilde{Y}).
\]
in Theorem \ref{thm:Alex}. If $s > 0$, then the highest powers in $\tilde{X}Y$ and $X\tilde{Y}$ are $\overline{k'}p + mrs + pq + (r-1)rs$ and $Q'q + mrs + pq + (r-1)rs$, respectively. By Lemma \ref{lem:identity}, it is clear that the highest power in $\tilde{X}Y - X\tilde{Y}$ is $Q'q + mrs + pq + (r-1)rs$.

Clearly,
\[
    X = \tilde{X} - A \quad \text{and} \quad Y = \tilde{Y} - B,
\]
where 
    \[A =  \ t^{\overline{k'}p + mrs} + (1-t^{rs})\sum_{i=m+1}^{r-1}t^{\overline{k_i}p + (i-1)rs} - t^{pq + (r-1)rs},\] 
and
    \[B =  \ t^{Q'q + mrs} + (1-t^{rs})\sum_{i=m+1}^{r-1}t^{Q_iq + (i-1)rs} - t^{pq + (r-1)rs}.\]
Then we have,
\begin{equation}\label{AB}
    \tilde{X}Y - X\tilde{Y} = \tilde{X}B - \tilde{Y}A.
\end{equation}
If $s > 0$, then the lowest powers of $t$ in $\tilde{X}B$ and $A\tilde{Y}$ are $Q'q + mrs$ and $\overline{k'}p + mrs$, respectively. By Lemma \ref{lem:identity}, it is clear that the lowest power of $t$ in $\tilde{X}Y - X\tilde{Y}$ is $\overline{k'}p + mrs$. Therefore,
\[
    \deg(\Delta_{T(p,q;r,s)}) = (p-1)(q-1) + (r-1)rs.
\]
\end{proof}
However, if $s<0$, describing the degree of the Alexander polynomial of $T(p,q;r,s)$ in terms of $p,q,r,\text{ and }s$ becomes challenging due to the difficulty in determining the highest of the lowest powers of $t$ in each $X$, $\tilde{X}$, $Y$, and $\tilde{Y}$. Nevertheless, we can determine it for the following special cases.
\begin{proposition}\label{prop:degree2}
    Suppose $s<0$ and $|rs|<q<p$. Then 
    \begin{equation*}
        \deg(\Delta_{T(p,q;r,s)})=(p-1)(q-1)+(r-1)rs. 
    \end{equation*}
\end{proposition}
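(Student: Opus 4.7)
The plan is to emulate the proof of Proposition~\ref{prop:degree1}: use
\[\Delta_{T(p,q;r,s)}(t) = \frac{1-t}{(1-t^p)(1-t^q)(1-t^r)}\bigl(\tilde{X}Y - X\tilde{Y}\bigr)\]
from Theorem~\ref{thm:Alex} and identify the highest and lowest exponents of $t$ appearing in $\tilde{X}Y - X\tilde{Y}$. The target claim is that even with $s<0$, provided $r|s| < q < p$, these extremal exponents are the same as in the $s>0$ case, namely $Q'q + mrs + pq + (r-1)rs$ and $\overline{k'}p + mrs$. Granting this, the same arithmetic as in Proposition~\ref{prop:degree1} (using $Q'q - \overline{k'}p = r$ from Lemma~\ref{lem:identity}) yields $\deg(\Delta_{T(p,q;r,s)}) = (p-1)(q-1) + (r-1)rs$.

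For the highest exponent, I would expand each of $X$, $Y$, $\tilde X$, $\tilde Y$ via $(1-t^{rs})t^e = t^e - t^{e+rs}$ and compare the finitely many resulting exponents. In $X$, the candidates are $0$, $\overline{k_i}p + (i-1)rs$, $\overline{k_i}p + irs$ and $pq + (r-1)rs$; writing $\overline{k_i}p = Q_iq - n_i$ via Lemma~\ref{lem:identity}, each comparison with $pq + (r-1)rs$ reduces to a positive combination of $(p-Q_i)q \geq q$ and $q - r|s| > 0$, so $pq + (r-1)rs$ wins. Analogous checks give the top exponents $\overline{k'}p + mrs$ in $\tilde X$, $pq + (r-1)rs$ in $Y$ and $Q'q + mrs$ in $\tilde Y$. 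Combining, the leading exponent of $X\tilde Y$ strictly exceeds that of $\tilde X Y$ by $r$, leaving the claimed highest exponent in the difference.

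For the lowest exponent, I would use the identity $\tilde X Y - X \tilde Y = \tilde X B - \tilde Y A$ from~(\ref{AB}). Since $\tilde X$ and $\tilde Y$ contain the constant term $1$, the lowest exponent of the difference is the smaller of those of $A$ and $B$. Enumerating the exponents in $A$ (namely $\overline{k'}p + mrs$, the pairs $\overline{k_i}p + (i-1)rs$ and $\overline{k_i}p + irs$ for $i > m$, and $pq + (r-1)rs$) and again applying Lemma~\ref{lem:identity} together with the easy bound $Q_{r-1} - Q' \geq r-1-m$ and the hypothesis $r|s|<q$, each of the other candidate exponents in $A$ strictly exceeds $\overline{k'}p + mrs$, while the parallel calculation for $B$ shows each of its other candidates exceeds $Q'q + mrs$. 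Since $\overline{k'}p < Q'q$, the overall minimum is $\overline{k'}p + mrs$.

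The main obstacle is the careful case analysis: with $s<0$, the exponent shifts from expanding $(1-t^{rs})$ now decrease with $i$, so it is not a priori clear which term dominates at either end. The hypothesis $r|s| < q$ is the precise slack needed to ensure that the $p$-coefficient growth, which by Lemma~\ref{lem:identity} jumps by at least $q - (r-1)$ at each step in $i$, outruns the $|rs|$-sized shifts in the $j$-index. The $m=0$ edge case, in which $\tilde X = 1 - t^{\overline{k'}p}$ and $\tilde Y = 1 - t^{Q'q}$ are binomials, should also be checked, but the same comparisons go through.
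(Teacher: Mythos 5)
Your proposal takes essentially the same route as the paper, whose entire proof is the observation that $r|s|<q$ forces $r|s|<d_iq$ and $r|s|<k_ip$ for all $i$, so that the extremal exponents of $\tilde{X}Y-X\tilde{Y}$ are unchanged from the $s>0$ case; your term-by-term comparisons are just a fully spelled-out version of that observation. One small repair: the jump bound $q-(r-1)$ you cite need not exceed $r|s|$ (e.g.\ $q=7$, $r=3$, $|s|=2$ satisfies $r|s|<q$ but $q-(r-1)=5<6=r|s|$); what actually saves the argument is that each jump $k_ip$ is a \emph{positive} multiple of $p$ (positivity following from $k_ip=d_iq-(n_i-n_{i-1})\ge q-(r-1)>0$ under the hypothesis), hence at least $p>q>r|s|$.
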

\begin{proof}
    Given that $|rs| < q < p$, it follows that $|rs| < d
    _iq$ and $|rs| < {k_i}p$ for all $i$. Consequently, the highest and the lowest powers of $t$ in $\tilde{X}Y - X\tilde{Y}$ remain unchanged from the case when $s > 0$.
\end{proof}

The situation described above can be viewed as the case where the number of additional negative twists is relatively fewer than the original positive twists of the torus knot. The following presents the opposite scenario.
\begin{proposition}\label{prop:degree3}
    Suppose $s<0$, and $|rs|>d_iq$ and $|rs|>k_i p$ for all $i=1,\dots,r-1$. Then 
    \begin{enumerate}
        \item $\deg\left(\Delta_{T(p,q;r,s)}\right)=(k_1-\overline{k_{r-1}}-1)p+(Q'-1)q-(r-1)(rs+1)$ if $m=0$.
        \item $\deg\left(\Delta_{T(p,q;r,s)}\right)=(p-1)(q-1)+(Q_1-Q'-Q_{r-1})q-(r-1)rs$ if $m=r-1$.
        \item $\deg\left(\Delta_{T(p,q;r,s)}\right)=(\overline{k_{m+1}}-\overline{k_{r-1}}-1)p+(Q_1-Q_m-1)q-(r-1)(rs+1)$ if $0<m<r-1$.
    \end{enumerate}
\end{proposition}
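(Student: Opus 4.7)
The approach mirrors the proofs of Propositions~\ref{prop:degree1} and~\ref{prop:degree2}. From Theorem~\ref{thm:Alex} combined with equation~(\ref{AB}),
\[
\Delta_{T(p,q;r,s)}(t)=\frac{(1-t)(\tilde X B-\tilde Y A)}{(1-t^p)(1-t^q)(1-t^r)},
\]
so if $H,L$ denote the highest and lowest exponents of $t$ with nonzero coefficient in $\tilde X B-\tilde Y A$, then $\deg\Delta_{T(p,q;r,s)}=H-L+1-p-q-r$. The task is therefore to pin down $H$ and $L$ under the hypothesis.

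\textbf{Monotonicity from the hypothesis.} The assumptions $|rs|>k_ip$ and $|rs|>d_iq$ for every $i=1,\dots,r-1$ yield
\[
\overline{k_{i+1}}p+irs<\overline{k_i}p+(i-1)rs \quad\text{and}\quad Q_{i+1}q+irs<Q_iq+(i-1)rs,
\]
together with the analogous inequalities for the sequences $\overline{k_i}p+irs$ and $Q_iq+irs$. Thus, after expanding $(1-t^{rs})$ in the defining formulas of $A,B,\tilde X,\tilde Y$, all exponents appearing inside each summation are strictly monotone in the index $i$. The candidate extreme exponents of each of these polynomials can then be read off from the endpoint terms of the corresponding sums, together with the isolated boundary terms $1$, $\pm t^{\overline{k'}p+mrs}$, $\pm t^{pq+(r-1)rs}$, etc.

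\textbf{Case-by-case comparison.} I will handle the three cases $m=0$, $m=r-1$, and $0<m<r-1$ separately. In each case, the extremes of $\tilde X B$ and of $-\tilde Y A$ are produced by summing the corresponding extremes of the factors. The two candidates for $H$ (and likewise for $L$) arising from $\tilde X B$ versus $-\tilde Y A$ are then compared via Lemma~\ref{lem:identity}, which supplies $Q_iq=\overline{k_i}p+n_i$ (with $n_i\in\{0,\dots,r-1\}$) and $Q'q=\overline{k'}p+r$. These identities reduce each comparison to a difference of the integer parameters $n_i$, $r$, and $m$, which turns out to be nonzero in every case, confirming that the extreme exponent is uniquely realised and identifying which of $\tilde X B$ or $-\tilde Y A$ contributes $H$ or $L$. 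Substitution into $\deg\Delta=H-L+1-p-q-r$ and simplification via Lemma~\ref{lem:identity} then gives the three stated formulas.

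\textbf{Main obstacle.} The most delicate case is $m=r-1$ (case~(2)), since the sums over $i=m+1,\dots,r-1$ in $A$ and $B$ are empty, so $A=t^{\overline{k'}p+(r-1)rs}-t^{pq+(r-1)rs}$ and $B=t^{Q'q+(r-1)rs}-t^{pq+(r-1)rs}$ share the common trailing term $-t^{pq+(r-1)rs}$. A preliminary regrouping of $\tilde X B-\tilde Y A$ (for instance as $\tilde X(B-A)+(\tilde X-\tilde Y)A$) will be needed before the extreme-power analysis proceeds cleanly. A secondary but analogous subtlety appears in case~(3) when some of $k_1,k',k_{m+1}$ vanish: in such degenerate subcases a pair of terms in $\tilde X,\tilde Y,A,$ or $B$ cancel and the apparent extremes shift; one must verify, again using Lemma~\ref{lem:identity}, that these shifts do not displace the final values of $H$ and $L$.
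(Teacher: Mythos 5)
Your skeleton matches the paper's: same reduction to the extreme exponents of $\tilde X B-\tilde Y A$ via Equation~(\ref{AB}), same use of the monotonicity forced by $|rs|>k_ip$ and $|rs|>d_iq$, and same reliance on Lemma~\ref{lem:identity} to compare the two candidate extremes. But there is a genuine gap in case~(3). There the two candidates for the highest power are $Q_{m+1}q+k_1p+mrs$ and $Q_1q+\overline{k_{m+1}}p+mrs$, so writing $Q_1q=k_1p+n_1$ and $Q_{m+1}q=\overline{k_{m+1}}p+n_{m+1}$ the comparison reduces to the sign of $n_1-n_{m+1}$. Your argument stops at ``the difference turns out to be nonzero,'' but nonzero only guarantees that the maximum is uniquely attained; to obtain the \emph{stated} formula $(\overline{k_{m+1}}-\overline{k_{r-1}}-1)p+(Q_1-Q_m-1)q-(r-1)(rs+1)$ you must know \emph{which} of the two terms wins, i.e.\ that $n_1>n_{m+1}$ (and dually $n_m<n_{r-1}$ for the lowest power). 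Unlike cases~(1) and~(2), where Lemma~\ref{lem:identity} pins the sign outright (the differences are $n_1-r<0$ and $n_1>0$ respectively), this inequality does not follow from Lemma~\ref{lem:identity} alone: the paper proves it by a separate combinatorial argument about the ordered sequence $[q^{-1}],[2q^{-1}],\dots,[rq^{-1}]$ of residues mod $p$, using that $[rq^{-1}]$ is neither the minimum nor the maximum when $0<m<r-1$, so that the index of the minimal residue occurs after the index of the successor of $[rq^{-1}]$. Without supplying this (or an equivalent) argument, case~(3) is not established.

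Two smaller points. First, your flagged ``main obstacle'' in case~(2) is not actually an obstacle: no regrouping beyond $\tilde XB-\tilde YA$ is needed, since the shared term $-t^{pq+(r-1)rs}$ of $A$ and $B$ is multiplied by $\tilde X$ and $\tilde Y$ respectively, and Lemma~\ref{lem:identity} already separates the resulting candidates $pq+k_1p+(r-1)rs$ and $pq+Q_1q+(r-1)rs$ by $n_1\geq 1$. Second, your worry about degenerate subcases with $k_1$ or $k_{m+1}$ vanishing is reasonable to raise but is not where the real difficulty lies; the missing sign determination above is.
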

\begin{proof}
    If $m=0$, then $\tilde{X}=1-t^{\overline{k'}p}$ and $\tilde{Y}=1-t^{Q'q}$. Since $|rs|>d_iq$ and $|rs|>k_ip$ for all $i$, the highest powers of $t$ in $\tilde{X}Y$ and $X\tilde{Y}$ are $Q_1q+\overline{k'}p$ and $Q'q+k_1p$ respectively. By Lemma \ref{lem:identity} the highest power in $\tilde{X}Y-X\tilde{Y}$ is $Q'q+k_1p$. Similarly the lowest powers in $\tilde{X}Y$ and $X\tilde{Y}$ are $Q_{r-1}q+(r-1)rs$ and $\overline{k_{r-1}}p+(r-1)rs$ respectively. 
    Again by Lemma~\ref{lem:identity}, the lowest power in $\tilde{X}Y-X\tilde{Y}$ is $\overline{k_{r-1}}p+(r-1)rs$. Hence, considering the degree of the formula in Theorem \ref{thm:Alex},
    \[ \text{deg}\left(\Delta_{T(p,q;r,s)}\right)=(k_1-\overline{k_{r-1}}-1)p+(Q'-1)q-(r-1)(rs+1).\]

    If $m=r-1$, we have:
        \[\tilde{X}=1-(1-t^{rs})\sum_{i=1}^{r-1}t^{\overline{k_i}p+(i-1)rs}-t^{\overline{k'}p+(r-1)rs}\] 
    and
        \[\tilde{Y}=1-(1-t^{rs})\sum_{i=1}^{r-1}t^{Q_iq+(i-1)rs}-t^{Q'q+(r-1)rs}.\]
    We write $X= \tilde{X}- A$ and $Y=\tilde{Y}-B$, where 
    \[
        A=t^{pq+(r-1)rs}-t^{\overline{k'}p+(r-1)rs}\quad\text{and}\quad
        B=t^{pq+(r-1)rs}-t^{Q'q+(r-1)rs}.
    \]
    Then $\tilde{X}Y-X\tilde{Y} = \tilde{X}B-\tilde{Y}A$. The highest powers in $\tilde{X}B$ and $\tilde{Y}A$ are $pq+k_1p+(r-1)rs$ and  $pq+Q_1q+(r-1)rs$, respectively. Similarly, the lowest powers in $\tilde{X}B$ and $\tilde{Y}A$ are $Q'q+\overline{k_{r-1}}p+2(r-1)rs$ and $Q_{r-1}q+\overline{k'}p+2(r-1)rs$, respectively. By Lemma \ref{lem:identity}, the highest and the lowest powers in $\tilde{X}Y-X\tilde{Y}$ are $pq+Q_1q+(r-1)rs$ and $\overline{k'}p+Q_{r-1}q+2(r-1)rs$, respectively. Hence, \[
        \text{deg}\left(\Delta_{T(p,q;r,s)}\right)=(p-1)(q-1)+(Q_1-Q'-Q_{r-1})q-(r-1)rs.\]
        
    Now, consider the case when $0<m<r-1$. The highest powers of $t$ in $\tilde{X}B$ and $\tilde{Y}A$ of Equation (\ref{AB}) are $Q_{m+1}q+k_1p+mrs$ and $Q_1q+\overline{k_{m+1}}p+mrs$ respectively, and the lowest powers in $\tilde{X}B$ and $\tilde{Y}A$ are $Q_{r-1}q+\overline{k_m}p+(r+m-1)rs$ and $Q_mq+\overline{k_{r-1}}p+(r+m-1)rs$ respectively. 
    
    We claim that $Q_1q-k_1p > Q_{m+1}q-\overline{k_{m+1}}p$. Let $n_1:=Q_1q-k_1p$ and $n_{m+1}:=Q_{m+1}q-\overline{k_{m+1}}p$. Then, by Lemma~\ref{lem:identity}, we have $Q_1=[n_1q^{-1}]$ and $Q_{m+1}=[n_{m+1}q^{-1}]$. Consider the following sequence of residues of multiples of $q^{-1}$ modulo $p$: 
        \[a_1=[q^{-1}], a_2=[2q^{-1}], \dots, a_r=[rq^{-1}].\]
    Note that $[rq^{-1}]$ cannot be either the minimum or the maximum value in this sequence, as that would imply $m=0\text{ or }r-1$, by the definition of $m$, contradicting our assumption. Hence, $n_1$ corresponds to the index of the minimal value, and $n_{m+1}$ corresponds to the index of the next greater value than $[rq^{-1}]$. Moreover, since $m\neq r-1$, we must have $n_1\neq 1$. Examining the sequence of residues modulo $p$, we observe that the minimal value appears later in the sequence than the next greater value following $[rq^{-1}]$. This implies that $n_1>n_{m+1}$, proving the claim.

    Similarly, we can show that $Q_{m}q-\overline{k_{m}}p < Q_{r-1}q-\overline{k_{r-1}}p$. Then, it follows from these inequalities and Equation~(\ref{AB}) that the highest and lowest powers in $\tilde{X}Y-X\tilde{Y}$ are $Q_1q+\overline{k_{m+1}}p+mrs$ and $Q_mq+\overline{k_{r-1}}p+(r+m-1)rs$, respectively. Therefore, we obtain
    \[\deg(\Delta_{T(p,q;r,s)})=(\overline{k_{m+1}}-\overline{k_{r-1}}-1)p+(Q_1-Q_m-1)q-(r-1)(rs+1).\]
\end{proof}

\begin{figure}[t!]
    \centering
    \includegraphics[width=\textwidth]{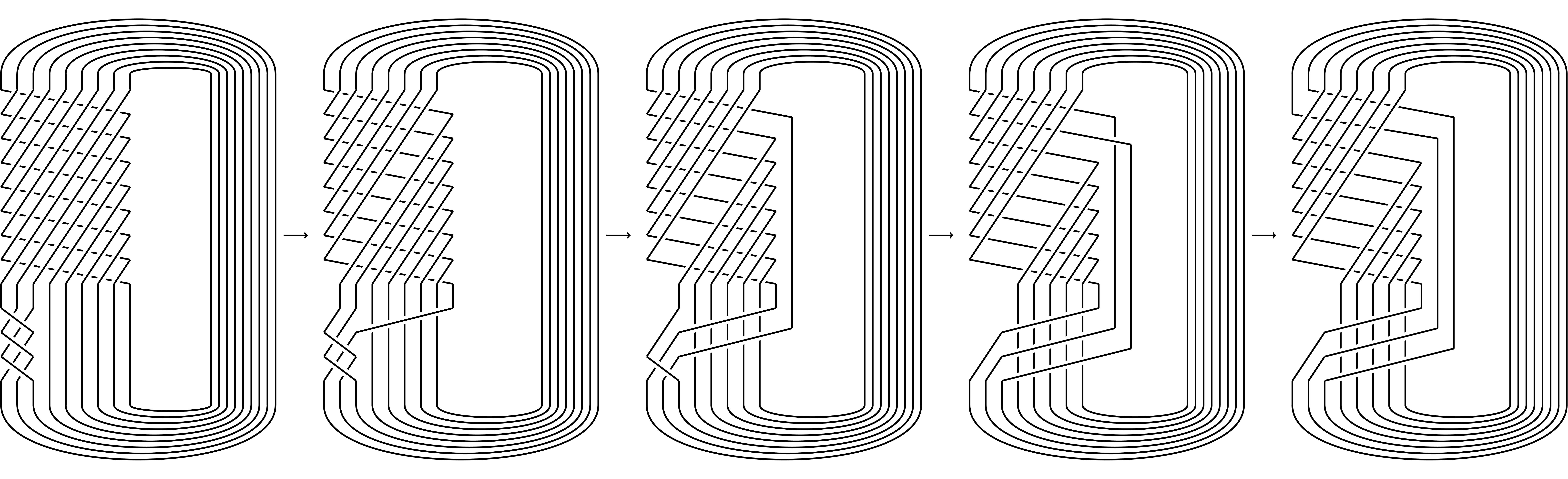}
    \caption{An isotopy of $T(9,8;3,-1)$ to a diagram that yields a Seifert surface of minimal genus via Seifert algorithm.}
    \label{Genus of T(9,8;3,-1)}
\end{figure}

The degree of twisted torus knot $T(p,p-1;3,-1)$ provides a sharp lower bound for the genus of the knot. By applying a similar isotopy as shown in Figure \ref{Genus of T(9,8;3,-1)} and using the Seifert algorithm on the resulting diagram, we obtain a Seifert surface of $T(p,p-1;3,-1)$ with a genus that coincides with half of the degree of the Alexander polynomial of the knot. Hence, the genus of the knot is equal to the genus of the surface. Since the final diagram of the knot is, in fact, the closure of a positive braid, one can also show that the Seifert surface obtained from this diagram yields the genus of the knot.

\section{Twisted Torus Knots Not Admitting Lens Space Surgery}\label{sec:L-space}
Although our formula for the Alexander polynomial of a twisted torus knot involves integers related to modular arithmetic, it can be simplified by focusing on specific families of twisted torus knots. We present an example here to illustrate this simplification and to prove Theorem \ref{thm:lens_space_surgery} concerning lens space surgery.

Consider a twisted torus knot of the type $T(9+2n,7+2n;3,s)$. Recalling the computation from the example preceding Theorem \ref{thm:fund}, we have
%In this case, note that $[(7+2n)^{-1}]=4+n$. Therefore, \[
%     Q=\{\left[i(4+n)\right]|i=0,1,2\}=\{0, 4+n, 8+2n \},\]
% and so
%     \[Q_0=0, \quad Q_1 = 4+n,\quad \text{and}\quad Q_2=2(4+n).\] 
% We also have \[
%     R=\{[-i(4+n)]| i= 1,2, \dots,(7+2n)\}=\{1,2,\dots,3+n, 5+n,6+n\dots,8+2n\},\]
% and hence \[
%     \overline{k_1}=3+n\quad\text{and}\quad\overline{k_2}=2(3+n).\]
% Since $Q'=[3(4+n)]=3+n$, we obtain \[
%     m=0\quad\text{and}\quad\overline{k'}=2+n.\]
\begin{align*}
    X(t)=&1-(1-t^{3s})\sum_{i=1}^{2}t^{i(3+n)(9+2n)+(i-1)3s}-t^{(9+2n)(7+2n)+6s},\\
    \tilde{X}(t)=&1-t^{(2+n)(9+2n)},\\
    Y(t)=&1-(1-t^{3s})\sum_{i=1}^{2}t^{i(4+n)(7+2n)+(i-1)3s}-t^{(9+2n)(7+2n)+6s},\quad\text{and}\\
    \tilde{Y}(t)=&1-t^{(3+n)(7+2n)}.
\end{align*}
The Alexander polynomial of $T(9+2n,7+2n;3,s)$ as follows:
\[\Delta_{T(9+2n,7+2n;3,s)}(t) = \frac{(1-t)(\tilde{X}(t)Y(t)-X(t)\tilde{Y}(t))}{(1-t^{9+2n})(1-t^{7+2n})(1-t^{3})}\]

Now, assume $s\geq 2$. We claim that the coefficient of $t^{31+15n+2n^2}$, which is the eighth lowest power term of $\Delta_{T(9+2n,7+2n;3,s)}(t)$, is $-2$. To determine this coefficient of $t^{31+15n+2n^2}$, we may disregard all terms in $\tilde{X}(t)Y(t)-X(t)\tilde{Y}(t)$ whose power is higher than $31+15n+2n^2$. Thus, under the assumptions for $n$ and $s$, $\tilde{X}(t)Y(t)-X(t)\tilde{Y}(t)$ only includes the following terms with powers less than or equal to this value:\[
    -t^{18+13n+2n^2}+t^{21+13n+2n^2}+t^{(3+n)(9+2n)}-t^{(4+n)(7+2n)}\]
Since \[
    \frac{1}{1-t^m}=1+t^m+t^{2m}+\dots\]
and considering the expansion of the five factors \[
    (\tilde{X}(t)Y(t)-X(t)\tilde{Y}(t))(1-t)(1+t^{(9+2n)}+t^{2(9+2n)}+\dots)(1+t^{7+2n}+t^{2(7+2n)}+\dots)(1+t^{3}+t^{6}+\dots)
\] of the Alexander polynomial, all terms with the power $31+15n+2n^2$ are as follows:\[
     t^{(18+13n+2n^2)+1+(9+2n)+3},\quad-t^{(18+13n+2n^2)+(7+2n)+6},\quad t^{(21+13n+2n^2)+(7+2n)+3},\]\[ 
     -t^{(21+13n+2n^2)+1+(9+2n)},\quad -t^{(3+n)(9+2n)+1+3},\quad -t^{(4+n)(7+2n)+3}.\]
Therefore, the coefficient of $t^{31+15n+2n^2}$ is $-2$, and Theorem \ref{thm:lens_space_surgery} follows.

\clearpage
\bibliographystyle{alpha}
\bibliography{references-ttk}

\end{document}